\newtheorem{theorem}{Theorem}[section]
\newtheorem{lemma}[theorem]{Lemma}
\newcommand\reallywidehat[1]{%
\savestack{\tmpbox}{\stretchto{%
  \scaleto{%
    \scalerel*[\widthof{\ensuremath{#1}}]{\kern-.6pt\bigwedge\kern-.6pt}%
    {\rule[-\textheight/2]{1ex}{\textheight}}
  }{\textheight}%
}{0.5ex}}%
\stackon[1pt]{#1}{\tmpbox}%
}
\newtheorem{definition}[theorem]{Definition}
\newtheorem{proposition}[theorem]{Proposition}
\newtheorem{corollary}[theorem]{Corollary}
\newenvironment{acknowledgement}[1][Acknowledgements]
{\begin{trivlist} \item[\hskip \labelsep {\bfseries #1}]}
{\end{trivlist}}
\newtheorem{remark}[theorem]{Remark}
 \DeclareMathSymbol{\N}{\mathbin}{AMSb}{"4E}
\DeclareMathSymbol{\Z}{\mathbin}{AMSb}{"5A}
\DeclareMathSymbol{\R}{\mathbin}{AMSb}{"52}
\DeclareMathSymbol{\Q}{\mathbin}{AMSb}{"51}
\DeclareMathSymbol{\I}{\mathbin}{AMSb}{"49}
\DeclareMathSymbol{\C}{\mathbin}{AMSb}{"43}
\def\M{{\mathbb M}}
\numberwithin{equation}{section}
\title{A characterization of real holomorphic chains and applications in representing homology classes by algebraic cycles}
\author{Jyh-Haur Teh, Chin-Jui Yang}
\date{}
\begin{document}
\maketitle

\begin{abstract}
We show that a $2k$-current $T$ on a complex manifold is a real holomorphic $k$-chain if and only if $T$ is locally real rectifiable, $d$-closed and has $\mathcal{H}^{2k}$-locally finite support. This result is applied to study homology classes represented by algebraic cycles.
\end{abstract}

\maketitle
\section{Introduction}
A holomorphic chain on a complex manifold is a formal finite linear combination of some irreducible holomorphic subvarieties with integer coefficients. Since every holomorphic variety $V$ of dimension $k$ naturally defines a $d$-closed integral current $[V]$ of type $(k, k)$, it is a natural question to ask if the three properties: $d$-closed, integral and type $(k, k)$ suffice to characterize holomorphic chains. For $d$-closed positive integral currents, the problem was solved by King (\cite{K71}). For general $d$-closed integral currents of type $(k, k)$, the problem was solved by Harvey and Shiffman (\cite{HS74}) but with a hypothesis on the support of currents. Harvey and Shiffman conjectured that this hypothesis was not needed, but
they were unable to overcome it. This problem was finally solved by Alexander (\cite{A97}) after more than 20 years later. The case for holomorphic chains with real coefficients is quite different from the integral case. Being $d$-closed, type $(k, k)$ and real rectifiable may not be holomorphic chains with real coefficients. In this case, we really need
restriction on the support of currents. We solved this problem for positive $d$-closed real rectifiable currents in \cite{JC}, and in this paper, we solve this problem completely without the positivity condition. The following is our main result.

\begin{theorem}\label{main theorem}
Let $X$ be a complex manifold. A $2k$-current $T$ is a real holomorphic chain on $X$ if and only if $T \in RR^{loc}_{k,k}(X)$ is $d$-closed and $\mbox{spt}(T)$ is $\mathcal{H}^{2k}$-locally finite.
\end{theorem}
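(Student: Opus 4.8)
The forward direction is routine and I would dispense with it first: if $T=\sum_i r_i[V_i]$ is a real holomorphic $k$-chain, each $[V_i]$ is $d$-closed of bidimension $(k,k)$, is carried by the $2k$-rectifiable set $V_i$, and satisfies $\mathcal{H}^{2k}(V_i\cap K)<\infty$ for every compact $K$ by the Wirtinger/Lelong formula; hence $T\in RR^{loc}_{k,k}(X)$, $dT=0$, and $\mathrm{spt}(T)\subseteq\bigcup_iV_i$ is $\mathcal{H}^{2k}$-locally finite. All the content is in the converse, which is local, so I would fix a coordinate polydisc and write $T=\tau(M,\theta,\vec\xi)$ with $M=\mathrm{spt}(T)$, $\theta$ a real multiplicity, $\vec\xi$ orienting the approximate tangent planes; because $T$ has bidimension $(k,k)$, at $\mathcal{H}^{2k}$-a.e.\ point of $M$ this tangent plane is a complex $k$-plane. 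The whole game is then to show that $M$ is, locally, a complex analytic subvariety of pure dimension $k$.

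To reach that, the plan is to reduce to the positive situation already handled in \cite{JC}. I would split $M$ along the sign of $\theta$, forming the positive rectifiable currents $T^{\pm}=\tau(M,\theta^{\pm},\vec\xi)$, so $T=T^{+}-T^{-}$ and both $T^{\pm}$ still have complex approximate tangent planes a.e. The obstruction to applying \cite{JC} verbatim is that neither $T^{+}$ nor $T^{-}$ need be $d$-closed: $dT^{+}=dT^{-}=:R$ is a locally flat $(2k-1)$-current carried by the (presumably small) set where $\theta$ vanishes or jumps. I would next show that this exceptional set is $\mathcal{H}^{2k-1}$-$\sigma$-finite and, more importantly, that it cannot carry a genuine $2k$-dimensional ``interior boundary''; here is exactly where the hypothesis that $\mathrm{spt}(T)$ is $\mathcal{H}^{2k}$-locally finite is indispensable, since without positivity there is no monotonicity formula for $\|T\|$ to control densities. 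Concretely, I would prove that off a relatively closed set $E\subseteq M$ with $\mathcal{H}^{2k}(E)=0$ the set $M$ is a $k$-dimensional complex submanifold, and then invoke a Shiffman/Bishop-type removable-singularity theorem — legitimate because $M$ is closed with locally finite $2k$-volume — to conclude that $M$ itself is a locally finite union $\bigcup_iV_i$ of irreducible $k$-dimensional subvarieties.

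\noindent\textbf{The step I expect to be the main obstacle} is precisely this control of the set where the real multiplicity degenerates: in the integral case Alexander \cite{A97} exploited integrality of densities (Siu-type semicontinuity), and in the positive case \cite{JC} one has monotonicity, but with general real coefficients both tools are gone, so one must argue directly that $d$-closedness propagates the complex-analytic structure of the ``good part'' of $M$ across the sign locus. I would attack it by combining a Federer-type support/constancy analysis on the good part with a careful slicing of $T$ by holomorphic coordinate functions near the exceptional set, checking that the slices are $d$-closed rectifiable $0$-currents on complex curves and hence have no room to spread out.

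Once $M=\bigcup_iV_i$ is known, the conclusion is quick. On the open set $\Omega=X\setminus V_{\mathrm{sing}}$ the current $T$ is a $d$-closed $2k$-current supported in the $2k$-dimensional complex submanifold $V\cap\Omega$, so by the constancy theorem it equals $\sum_i r_i[V_i\cap\Omega]$ with $r_i\in\mathbb{R}$ (one constant per $V_i$, since each $V_{i,\mathrm{reg}}$ minus the lower-dimensional set $\bigcup_{j\ne i}V_j$ is still connected). Then $T-\sum_i r_i[V_i]$ is a real rectifiable $2k$-current supported in $V_{\mathrm{sing}}$, a set of $\mathcal{H}^{2k}$-measure zero, hence it vanishes. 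Therefore $T=\sum_i r_i[V_i]$ is a real holomorphic $k$-chain, and the local chains glue to a global one because $\{V_i\}$ is $\mathcal{H}^{2k}$-locally finite.
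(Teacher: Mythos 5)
Your overall strategy --- reduce to the positive case settled in \cite{JC} by replacing the real multiplicity $\theta$ with something nonnegative --- is exactly the paper's key idea, and your identification of the sign-degeneracy locus as the main obstacle is accurate. But the proposal does not actually overcome that obstacle, and the step you sketch for it would fail. First, your claim that the set where $\theta$ vanishes or changes sign is $\mathcal{H}^{2k-1}$-$\sigma$-finite is unjustified: for a real rectifiable current the multiplicity is merely an $\mathcal{H}^{2k}$-integrable real function on the carrier, and the boundary of $\{\theta>0\}$ inside $M$ need not have any dimensional control, so $dT^{+}=dT^{-}$ cannot be dismissed as ``small.'' Second, the slicing argument you propose --- ``the slices are $d$-closed rectifiable $0$-currents and hence have no room to spread out'' --- is vacuous: every $0$-current is $d$-closed, so this gives no information; the hard content is showing the slices are holomorphic $0$-chains (finitely many points with locally constant real weights), which in the $k=1$ case requires the entire Cauchy-transform/representing-measure/polynomial-hull machinery of Alexander that your proposal omits. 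Indeed you never address the base case $k=1$ at all, and without it no reduction-to-lower-dimension scheme can close.

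The paper's mechanism, which your proposal is missing, is an induction on $k$. The base case $k=1$ is proved by Alexander's method (Cauchy formula for $bT(r)$, representing measures supported in the polynomial hull of $K$, and the analytic-cover Lemma \ref{A1}). For $k\ge 2$ one forms the single positive current $T'$ (your $T^{+}+T^{-}$ with the complex orientation), and proves $dT'=0$ \emph{not} by estimating the sign locus but as follows: slicing by each coordinate projection $\pi_j$ preserves closedness and the $\mathcal{H}^{2k-2}$-finiteness of supports (Theorem \ref{volume}), so by the induction hypothesis $\langle T,\pi_j,a\rangle$ is a real holomorphic $(k-1)$-chain for a.e.\ $a$, whence $\langle T',\pi_j,a\rangle$ is a \emph{positive} holomorphic chain and in particular $d$-closed; an algebraic decomposition of $(k-1,k)$-forms into pieces of the form $\omega_{i_1}\wedge\psi$ then lets the slicing formula (Lemma \ref{slice}) convert $dT'(\varphi)$ into an integral of $d\langle T',\pi_{i_1},a\rangle(\psi)=0$. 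Only then is \cite{JC} invoked. Your concluding constancy-theorem step is fine once $M$ is known to be a subvariety, but as written the proposal has no valid route to that point.
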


From this result, we are able to generalize the structure theorem of Harvey and King (Theorem \ref{structure theorem}) and some results about stable currents of Harvey
and Shiffman.
We are interested in this problem not only because it is a natural question to ask, but it also has some applications in our study of algebraic cycles, tightly related
to the Hodge conjecture. For example in Proposition \ref{R+dd^c}, we prove that on a complex projective manifold,
if a $d$-closed smooth form $e$ considered as a current can be written as
$$e=P+da$$
where $P$ is a Lipschitz $2k$-chain with rational coefficients which is $d^c$-closed, then $e$ is homologous to some algebraic cycle with rational coefficients. Application of methods from geometric measure theory to study algebraic cycles is very fruitful, some papers that are especially inspiring us are
\cite{D, DL03, H77, HL75, HK, HK2, K71, L75, L89}.

This paper is organized as follows. We follow Alexander's strategy to prove our main result by induction. Section 2 contains some preliminary results that are needed
to prove the case $k=1$, and Section 3 completes the induction. Our new key idea is an observation made by the second author that the positive current $T'$ associated
to a $d$-closed real rectifiable current $T$ of type $(k, k)$ with $\mathcal{H}^{2k}$-locally finite support is actually $d$-closed for $k\geq 2$. Therefore by our result for the positive case in \cite{JC}, $T'$ and hence $T$ are holomorphic chains with real coefficients. This in some sense simplifies half of Alexander's proof, but since we use Siu's semicontinuity theorem
in an essential way for the positive case, this does not mean our proof is easier, but probably easier conceptually. In Section 4, we use our main result to generalize
some results that we proved in \cite{JC} and some results in \cite{HS74}. These include some results about homologically volume minimizing currents, stable currents and
stationary currents.

\begin{acknowledgement}
We would like to thank the reviewer for his/her valuable comments on the paper and
Professor Harvey for his encouragement.
\end{acknowledgement}

\section{Preliminary results}
We fix some notations that will be used throughout this paper. Let $M$ be an oriented smooth manifold.  Let $A^r(M)$ be the space of complex-valued smooth $r$-forms on $M$ and let $A_c^r(M)$ be the space of complex-valued $r$-forms with compact support on $M$. Dually,
$\mathscr{D}^\prime_r(M)$ is the space of currents of dimension $r$ and $\mathscr{E}^\prime_r(M)$ is the space of currents of dimension $r$ with compact support.

\begin{definition}
An $r$-current $T\in \mathcal{E}'_r(M)$ with support in a compact set $K\subset M$ is said to be a real rectifiable current in $K$ if for any $\varepsilon>0$, there
is an open subset $U$ of some $\mathbb{R}^n$, a Lipschitz map $f:U \rightarrow M$ and a finite real polyhedral $r$-chain $P$ (in this article, we assume that simplices are nonoverlapping) with $f(\mbox{spt} P) \subset K$ and
$$\M(T - f_*(P))< \epsilon$$
where $\M$ denotes the mass norm.
The group of real rectifiable $r$-currents in $K$ is denoted by $RR_{r, K}(M)$ and elements of the union
$$ RR_{r}(M):=\bigcup_{\underset{\mbox{K compact}}{K\subset M,}}RR_{r, K}(M)$$
are called real rectifiable $r$-currents in $M$.
The group $RR^{loc}_r(M)$ of locally real rectifiable $r$-currents in $M$ is the collection of all
$T \in \mathscr{D}^\prime_r(M)$ such that for each $x\in M$,  there is $S \in RR_r(M)$ such that $x\notin \mbox{spt}(T - S)$.
\end{definition}

We recall the definition of real holomorphic chains.
\begin{definition}
Let $X$ be a complex manifold. A current $T \in \mathscr{D}^{\prime}_{2k}(X)$ is said to be a real holomorphic $k$-chain on
$X$ if $T$ can be written in the form $T = \sum^{\infty}_{j=1} r_j [V_j]$ where $r_j \in \mathbb{R}$ and $V = \bigcup^{\infty}_{j=1} V_j$ is a
purely $k$-dimensional holomorphic subvariety of $X$ with irreducible components $\{V_j\}^{\infty}_{j=1}$. The vector space of
real holomorphic $k$-chains on $X$ is denoted by $\mathscr{RZ}_k(X)$. Also let $\mathscr{RZ}^{+}_k(X)$ denote the
set of positive real holomorphic $k$-chains on $X$, i.e., those real holomorphic $k$-chains with nonnegative coefficients.
\end{definition}

We denote the Hausdorff $k$-measure by $\mathcal{H}^k$.

\begin{definition}
A $\mathcal{H}^k$-measurable subset $A \subset M$ is called $\mathcal{H}^k$-locally finite if for all $a \in M$, there is an open neighborhood $V\subset M$ of $a$ such that $\mathcal{H}^k(V\cap A) < \infty$.
\end{definition}

We refer the reader to \cite[Theorem 2.4.4]{K71} for the following result.
\begin{lemma}
Let $A$ be a subset of $\mathbb{C}^n$ and let $\epsilon > 0$. If $\mathcal{H}^{2k+\epsilon}(A) = 0$, then for almost all complex $(n-k)$ planes $L$ through $0$, $\mathcal{H}^{\epsilon}(A\cap L) = 0.$ When $A$ is closed in a neighborhood of $0$ and $\pi_L$ is a linear map from $\mathbb{C}^n$ to $\mathbb{C}^k$ with kernel $L$, $\mathcal{H}^1(A\cap L) = 0$ implies that there is a neighborhood $V$ of $0$ such that the restriction $\pi |_{A\cap V}$ is a proper map.
\end{lemma}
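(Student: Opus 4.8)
This statement is \cite[Theorem~2.4.4]{K71}; here is the idea of the argument. For the slicing assertion (assume $k<n$, the case $k=n$ being trivial) the plan is to use the transitivity of the unitary group $U(n)$ on the complex $(n-k)$-planes through $0$. Fix a reference plane $L_0$, let $\pi_0\colon\C^n\to\C^k$ be orthogonal projection with kernel $L_0$, and note that $g\mapsto g^{-1}L_0$ pushes Haar measure on $U(n)$ to the invariant measure on the Grassmannian; since each $g$ is an isometry of $\C^n$ it suffices to prove $\mathcal{H}^\epsilon(gA\cap L_0)=0$ for Haar-almost every $g$. As $\{0\}$ is $\mathcal{H}^\epsilon$-null and $A$ is the union of $\{0\}$ (if present) and the sets $A_m:=A\cap\{|x|\ge 1/m\}$, it is enough to prove this with $A$ replaced by each $A_m$ and then take a countable union.

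Fix $m$ and $\delta,\eta>0$. Because $\mathcal{H}^{2k+\epsilon}_\delta(A_m)\le\mathcal{H}^{2k+\epsilon}(A)=0$, cover $A_m$ by sets $U_i$ of diameter $r_i\le\delta$, each containing a point $x_i\in A_m$ (so $|x_i|\ge 1/m$), with $\sum_i r_i^{2k+\epsilon}<\eta$. For any $g$ the slice $gU_i\cap L_0$ has diameter $\le 2r_i$ and is nonempty only when $\operatorname{dist}(gx_i,L_0)=|\pi_0(gx_i)|\le r_i$, so $\mathcal{H}^\epsilon_{2\delta}(gA_m\cap L_0)\le 2^\epsilon\sum_i r_i^\epsilon\,\mathbf{1}\bigl[\,|\pi_0(gx_i)|\le r_i\,\bigr]$. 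The geometric input is the elementary bound $\bigl|\{g\in U(n):|\pi_0(gx)|\le t\}\bigr|\le C\,(t/|x|)^{2k}$ with a dimensional constant $C$ — since $gx$ is uniformly distributed on the sphere $\{y:|y|=|x|\}$ and the part of that sphere where $|\pi_0|\le t$ is a tube of radius $\approx t$ about the codimension-$2k$ subsphere cut out by $L_0$. Integrating over $U(n)$ and using $|x_i|\ge 1/m$ gives $\int_{U(n)}\mathcal{H}^\epsilon_{2\delta}(gA_m\cap L_0)\,dg\le 2^\epsilon C\,m^{2k}\sum_i r_i^{2k+\epsilon}<2^\epsilon C\,m^{2k}\eta$. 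Since $\eta$ is arbitrary and $\mathcal{H}^\epsilon_{2\delta}\uparrow\mathcal{H}^\epsilon$ as $\delta\to0$, a routine monotone-convergence argument with these measurable upper bounds yields $\mathcal{H}^\epsilon(gA_m\cap L_0)=0$ for a.e.\ $g$; a countable union over $m$ and the substitution $L=g^{-1}L_0$ complete the first assertion. I expect this integral-geometric step to be the main obstacle: one must slice equivariantly through the fixed point $0$, rather than averaging over affine translates (the usual Fubini-for-Hausdorff-measure setting), and then extract an almost-everywhere statement from an integral bound.

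For the properness assertion, choose linear coordinates $\C^n=\C^k_w\times\C^{n-k}_z$ with $L=\{w=0\}$ and $\pi_L(w,z)=w$, and seek a product neighbourhood $V=B_\sigma\times B_\tau$ of $0$. The key point is that $\pi_L|_{A\cap V}$ is proper once $A$ misses the ``side boundary'' $\overline{B_\sigma}\times\partial B_\tau$: for compact $K\subset B_\sigma$, $A\cap V\cap\pi_L^{-1}(K)=A\cap(K\times B_\tau)$, and any limit point lies in $A\cap(\overline{B_\sigma}\times\overline{B_\tau})$ but, avoiding the side boundary, in $K\times B_\tau\subset V$; hence this set is closed in $\C^n$ and bounded, so compact. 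To make $A$ miss the side boundary, note $z\mapsto|z|$ is $1$-Lipschitz, so $\mathcal{H}^1(A\cap L)=0$ forces $\{|z|:(0,z)\in A\cap L\}$ to be $\mathcal{H}^1$-null in $\R$; pick $\tau>0$ arbitrarily small outside that set (and inside the neighbourhood where $A$ is closed), so $A\cap(\{0\}\times\partial B_\tau)=\varnothing$, and then use compactness of $\{0\}\times\partial B_\tau$ together with closedness of $A$ to get $A\cap(\overline{B_\sigma}\times\partial B_\tau)=\varnothing$ for $\sigma$ small enough. Here the only real content is this reformulation and the observation that $\mathcal{H}^1(A\cap L)=0$ is used precisely to make the one-dimensional ``radius set'' negligible — which is also why $\mathcal{H}^1$, rather than $\mathcal{H}^\epsilon$ for some possibly larger $\epsilon$, is the natural hypothesis in this part.
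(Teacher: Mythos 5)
The paper does not actually prove this lemma: it is imported verbatim as \cite[Theorem 2.4.4]{K71} with a bare citation, so there is no in-paper argument to compare yours against. Judged on its own, your sketch is correct and is the standard integral-geometric proof one would expect to find in King. The first half is sound: reducing to the annuli $A_m=A\cap\{|x|\ge 1/m\}$ to control $|x_i|$ from below, the tube estimate $|\{g\in U(n):|\pi_0(gx)|\le t\}|\le C(t/|x|)^{2k}$ (valid because the pushforward of the uniform measure on the sphere under $\pi_0$ has bounded density on $\R^{2k}$ when $k<n$), and the extraction of an a.e.\ statement via the measurable majorants $2^\epsilon\sum_i r_i^\epsilon\,\mathbf{1}[\,|\pi_0(gx_i)|\le r_i\,]$ rather than via measurability of $g\mapsto\mathcal{H}^\epsilon_{2\delta}(gA_m\cap L_0)$ itself, which is the right way to dodge that issue. (Two trivia: the slice $gU_i\cap L_0$ has diameter at most $r_i$, not just $2r_i$, so the factor $2^\epsilon$ is unnecessary; and one should fix a countable sequence $\delta_j\downarrow 0$ and a countable set of thresholds $\alpha$ before taking the union of null sets.) The second half is also correct, including the key point that properness is meant as a map onto $\pi_L(V)=B_\sigma$ rather than onto all of $\C^k$ --- this matches how the paper uses the lemma, where $\pi|_{X_0}$ is proper onto $\Delta'(r)$ --- and your closing observation that the exponent $1$ is forced because the radius set must be $\mathcal{H}^1$-null in $\R$ to admit a good sphere $\{0\}\times\partial B_\tau$ is exactly the reason the hypothesis is stated with $\mathcal{H}^1$.
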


Since Theorem \ref{main theorem} is a local result, we may assume that $T\in RR^{loc}_{k, k}(U)$ for some open set $U\subset \C^n$, $dT=0$ and the support $\mbox{spt}(T)$ of $T$
is of $\mathcal{H}^{2k}$-locally finite. Furthermore, by making a translation, we may assume $0\in \mbox{spt}(T)$. This $T$ is fixed through section 2 and 3.

For $I = \{i_1,..., i_k\} \subset \{1,2,...,n\}$ where $i_1 <\cdots < i_k$, we let $\pi_I(z_1, ..., z_n)=(z_{i_1}, ..., z_{i_k})$ be the projection from $\mathbb{C}^n$ to $\mathbb{C}^k$. It follows from the above Lemma that, after a possible linear change of coordinates, each of the planes $\{z : \pi_I(z) = 0\}$ meets $X$ in a discrete set at $0$. Fix $I_0 = \{1,2,...,k\}$ and write $\pi$ for $\pi_{I_0}$. There exist $r > 0$ and $\delta > 0$ such that $\mbox{spt}(T)$ is disjoint from $\overline{\Delta^{\prime}(r)} \times \partial \Delta^{\prime \prime}(\delta)$ where
$$\Delta^{\prime}(r) = \{z^{\prime}=(z'_1, ..., z'_k) \in \mathbb{C}^k : |z_j^{\prime}| < r, 1\leq j \leq k\}$$ and
$$\Delta^{\prime \prime}(\delta) = \{z^{\prime \prime}=(z''_1, ..., z''_{n-k}) \in \mathbb{C}^{n-k} : |z^{\prime \prime}_j| < \delta, 1 \leq j \leq n-k\}$$
Set $$\Delta = \Delta^{\prime}(r) \times \Delta^{\prime \prime}(\delta)$$
and $X_0 = \mbox{spt}(T) \cap \Delta$. Then $\pi |_{X_0}$ is a proper map from $X_0$ onto $\Delta^{\prime}$ and, writing $z = (z^{\prime} , z^{\prime \prime})$, the projection $\pi$ to $\mathbb{C}^k$ is given by $\pi(z) = z^{\prime}$. Also, let $\pi_j : \Delta \rightarrow \mathbb{C}$ where $\pi_j(z) = z_j$ be the projection to the $j$-th coordinate.

We will prove our main result by induction. First, we use the method in $\cite{A97}$ to prove the case $k = 1$ .
Now let $k = 1$ and $\rho(z) = |z_1|$. Let $T(r) = T|_{U\cap (\Delta^{\prime}(r) \times \Delta^{\prime \prime}(\delta))}$ be the restriction of $T$ to $U\cap (\Delta^{\prime}(r) \times \Delta^{\prime \prime}(\delta))$.
Shrinking $r$, we may assume the slice $\langle T|_{U\cap (\mathbb{C} \times \Delta^{\prime \prime}(\delta))} , \rho , r  \rangle$ exists as a real rectifiable 1-current supported in
$$K:= \mbox{spt}(T)\cap (\partial\Delta^{\prime}(r)\times \Delta^{\prime \prime}(\delta))$$ and note that, in this case, the slice
$$\langle T|_{U\cap (\mathbb{C} \times \Delta^{\prime \prime}(\delta))} , \rho , r  \rangle = bT(r)$$ by $\cite[4.2.1]{F69}$.

From \cite[2.10.25]{F69}, we have a bound on the measure of slices cut by a Lipschitz map.

\begin{theorem}\label{volume}
If $f : \mathbb{R}^k \rightarrow \mathbb{R}^l$ is a Lipschitz map , $A \subset \R^k, \ 0 \leq m < \infty,$ and $0 \leq n < \infty$, then
$$
\int_{\mathbb{R}^l}^{\ast} \mathcal{H}^m(A \cap f^{-1}(y)) d\mathcal{H}^n(y) \leq (Lip(f))^n \frac{\Omega(m)\Omega(n)}{\Omega(m+n)}\mathcal{H}^{m+n}(A)
$$
\end{theorem}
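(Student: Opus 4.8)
This is the Eilenberg coarea inequality, and the plan is to recover it by a covering estimate carried out at a fixed scale, followed by a limiting argument. Write $L=Lip(f)$ and recall $\mathcal{H}^s(E)=\sup_{\rho>0}\mathcal{H}^s_\rho(E)$, where $\mathcal{H}^s_\rho(E)=\inf\{\Omega(s)\sum_i(\mathrm{diam}\,S_i/2)^s:E\subset\bigcup_iS_i,\ \mathrm{diam}\,S_i\le\rho\}$. We may assume $\mathcal{H}^{m+n}(A)<\infty$, otherwise there is nothing to prove. Since $y\mapsto\mathcal{H}^m_\rho(A\cap f^{-1}(y))$ increases to $y\mapsto\mathcal{H}^m(A\cap f^{-1}(y))$ as $\rho\downarrow0$, it suffices, by monotone convergence for the upper integral, to prove a bound on $\int^\ast\mathcal{H}^m_\rho(A\cap f^{-1}(y))\,d\mathcal{H}^n(y)$ that is uniform in $\rho$; and, in order to keep the images of the covering sets at a matching scale, it is cleanest to first bound the mixed quantity in which the outer $\mathcal{H}^n$ is also replaced by its scale-$L\rho$ approximation.

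So fix $\rho>0$ and $\epsilon>0$ and choose a countable cover $A\subset\bigcup_iS_i$ with $\mathrm{diam}\,S_i\le\rho$ and $\Omega(m+n)\sum_i(\mathrm{diam}\,S_i/2)^{m+n}\le\mathcal{H}^{m+n}_\rho(A)+\epsilon$; replacing $S_i$ by $S_i\cap A$ we may take $S_i\subset A$. The heart of the proof is the pointwise slice estimate: for each $y$, the subfamily $\{S_i:y\in f(S_i)\}$ covers $A\cap f^{-1}(y)$ by sets of diameter $\le\rho$, hence
$$\mathcal{H}^m_\rho\big(A\cap f^{-1}(y)\big)\le\Omega(m)\sum_i\Big(\frac{\mathrm{diam}\,S_i}{2}\Big)^m\mathbf{1}_{f(S_i)}(y).$$
Integrating in $y$, using that the upper integral of a nonnegative series is dominated by the sum of the upper integrals, that for an indicator one has $\int^\ast\mathbf{1}_{f(S_i)}\,d\mathcal{H}^n_{L\rho}=\mathcal{H}^n_{L\rho}(f(S_i))$, and that $\mathrm{diam}\,f(S_i)\le L\,\mathrm{diam}\,S_i\le L\rho$ forces $\mathcal{H}^n_{L\rho}(f(S_i))\le\Omega(n)(L\,\mathrm{diam}\,S_i/2)^n$ (cover $f(S_i)$ by itself), one obtains
$$\int^\ast\mathcal{H}^m_\rho\big(A\cap f^{-1}(y)\big)\,d\mathcal{H}^n_{L\rho}(y)\le\Omega(m)\Omega(n)L^n\sum_i\Big(\frac{\mathrm{diam}\,S_i}{2}\Big)^{m+n}\le\frac{\Omega(m)\Omega(n)}{\Omega(m+n)}L^n\big(\mathcal{H}^{m+n}(A)+\epsilon\big).$$
Letting $\epsilon\to0$ and then $\rho\to0$ gives the stated inequality.

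The one genuinely delicate point is the final limit. Because $\mathcal{H}^n_{L\rho}$ is only a countably subadditive premeasure rather than a measure, the interchange of the infinite sum with the upper integral against it, and the convergence $\int^\ast(\cdot)\,d\mathcal{H}^n_{L\rho}\to\int^\ast(\cdot)\,d\mathcal{H}^n$ as $\rho\to0$ for the increasing family at hand, both require justification; this bookkeeping is precisely what Federer's 2.10.24 supplies, and it is the step I would invoke rather than reprove. Everything else—the covering estimate and the trivial diameter bound for the image sets—is elementary.
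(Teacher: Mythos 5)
The paper offers no proof of this statement at all: it is quoted verbatim from Federer 2.10.25 (the Eilenberg inequality) and used as a black box, so there is no in-paper argument to compare yours against line by line. Your reconstruction is essentially Federer's own proof and it is correct. In particular you handle the one genuinely dangerous point properly: the naive estimate $\mathcal{H}^n(f(S_i)) \le \Omega(n)\bigl(\mathrm{diam}\, f(S_i)/2\bigr)^n$ is \emph{false} in general (an $n$-sphere in $\mathbb{R}^l$ already violates it), and replacing the target-side measure by its scale-$L\rho$ approximation $\mathcal{H}^n_{L\rho}$, for which the trivial cover-by-itself bound is valid, is exactly the right fix; this is the step a careless proof would botch. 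Deferring to Federer 2.10.24 for the subadditivity of the upper integral against the premeasures $\mathcal{H}^n_{L\rho}$ and for the monotone limit as $\rho \downarrow 0$ is also the standard way to close the argument. The only step I would ask you to spell out is the final passage to the limit: since both the integrand (through $\mathcal{H}^m_\rho$) and the integrator (through $\mathcal{H}^n_{L\rho}$) depend on the same $\rho$, you should first fix $\sigma > 0$, use $\mathcal{H}^m_\sigma \le \mathcal{H}^m_\rho$ for $\rho \le \sigma$ to obtain $\int^* \mathcal{H}^m_\sigma(A \cap f^{-1}(y))\, d\mathcal{H}^n_{L\rho}(y) \le (Lip(f))^n \frac{\Omega(m)\Omega(n)}{\Omega(m+n)}\mathcal{H}^{m+n}(A)$ for all small $\rho$, let $\rho \to 0$ by 2.10.24, and only then let $\sigma \to 0$ by monotone convergence for the upper integral against the fixed measure $\mathcal{H}^n$. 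With that decoupling of the two scales the proof is complete.
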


By the above theorem, we have
$$
\int_{B_{r_0}(0)}^{\ast} \mathcal{H}^1(X \cap \rho^{-1}(r)) d\mathcal{L}^1(r) \leq 4\mathcal{H}^2(X)
$$
for any $r_0 > 0.$ So we may assume $\mathcal{H}^1(K) < \infty$.

Since $bT(r)$ is a closed real rectifiable current, the set
$$N:= \{ z \in \mathbb{C}^n : \Theta^1 (bT(r),z) > 0 \}$$ is countably 1-rectifiable. So there exist countably many $C^1$-curves $\{\gamma_j\}^{\infty}_{j=1}$ such that for each $j$, there is a $\mathcal{H}^1$-integrable function $\theta_j$ on $\gamma_j$ which satisfy
$$
\theta_j(z) =
\left\{ \begin{array}{ll}
\Theta^1 (bT(r),z), & \textrm{if $z \in N$}\\
0, & \textrm{otherwise}\\
\end{array} \right.
 \ and \ \
 bT(r) = \sum^{\infty}_{j=1}\gamma_j\wedge\theta_j. \
$$
Hence
$$
\M(bT(r)) = \sum^{\infty}_{j=1} \M(\gamma_j\wedge\theta_j) = \sum^{\infty}_{j=1} \int_{\gamma_j} |\theta_j|d\mathcal{H}^1 < \infty.
$$
Also, $\|bT(r)\| = \sum^{\infty}_{j=1}\mathcal{H}^1\Bigl\lfloor|\theta_j|$. Here, $\gamma_j = image (c_j),$ for some $C^1$-embedding $c_j : (0,1) \rightarrow U$, and the orientation of $\gamma_j$ is induced from the natural orientation of $(0,1)$.\\

\begin{definition}
Let $\gamma : (0,1) \rightarrow \mathbb{C}^n$ be a $C^1$-curve  and $f$ be a $\mathcal{H}^1$-integrable complex-valued function on $\gamma((0,1))$. Write $\gamma(t) = (\gamma_1(t) , ... , \gamma_n(t)).$
We define
$$
\int_{\gamma} |f||dz_1| \equiv \int_{0}^{1} |f\circ\gamma(t)||\gamma_1^{\prime}(t)| dt.
$$
\end{definition}

By a simple computation, we have :

\begin{proposition}\label{Cauchy inequality}
Suppose $\gamma : (0,1) \rightarrow \mathbb{C}^n$ is a $C^1$-curve  and $f$ is a $\mathcal{H}^1$-integrable complex-valued function on $\gamma((0,1))$. Then
$$|\int_{\gamma} f dz_1| \leq \int_{\gamma} |f||dz_1|.$$
\end{proposition}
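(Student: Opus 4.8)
The plan is to reduce both sides to ordinary Lebesgue integrals over $(0,1)$ by pulling back along the parametrization, and then to invoke the elementary triangle inequality for complex-valued integrands.

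First I would record the pullback formula: writing $\gamma(t) = (\gamma_1(t), \dots, \gamma_n(t))$ as in the definition preceding the statement, the $1$-form $f\,dz_1$ pulls back under the $C^1$-map $\gamma$ to $(f\circ\gamma)(t)\,\gamma_1'(t)\,dt$, so that by the definition of the integral of a form along an oriented $C^1$-curve,
$$
\int_\gamma f\,dz_1 = \int_0^1 (f\circ\gamma)(t)\,\gamma_1'(t)\,dt,
$$
where the integrand on the right is a complex-valued function of $t\in(0,1)$.

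Next, if $\int_\gamma |f|\,|dz_1| = \int_0^1 |(f\circ\gamma)(t)|\,|\gamma_1'(t)|\,dt = +\infty$, the asserted inequality is trivial, so I may assume this quantity is finite; then $t \mapsto (f\circ\gamma)(t)\,\gamma_1'(t)$ is absolutely integrable on $(0,1)$, and the standard inequality $\bigl|\int g\bigr| \le \int |g|$ for $g\in L^1\bigl((0,1);\mathbb{C}\bigr)$ gives
$$
\Bigl| \int_0^1 (f\circ\gamma)(t)\,\gamma_1'(t)\,dt \Bigr| \le \int_0^1 |(f\circ\gamma)(t)|\,|\gamma_1'(t)|\,dt.
$$
Combining this with the displayed identity above and the definition of $\int_\gamma |f|\,|dz_1|$ yields the claim.

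There is essentially no obstacle: the only points requiring minor care are verifying the pullback identity, which is immediate from the $C^1$-regularity of $\gamma$, and disposing of the case where the right-hand side is infinite, handled at the outset. The $\mathcal{H}^1$-integrability hypothesis on $f$ is what guarantees finiteness of the right-hand side when the estimate is applied elsewhere, but the inequality itself holds unconditionally.
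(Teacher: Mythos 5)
Your argument is correct and is exactly the ``simple computation'' the paper alludes to without writing out: pull both integrals back to $(0,1)$ via the parametrization and apply the triangle inequality $\lvert\int g\rvert\le\int\lvert g\rvert$ for complex-valued $L^1$ functions. Since the paper offers no written proof, your version supplies the intended details, including the harmless disposal of the case where the right-hand side is infinite.
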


From \cite[section 12]{L83}, we have an area formula.

\begin{theorem}\label{area formula}(Area formula)
Let $A$ be a $\mathcal{H}^n$-measurable and countably $n$-rectifiable set in $\mathbb{R}^{n+k}$ and $f$ be a locally Lipschitz map on $V$ into $\mathbb{R}^{n + k_1}$ where $V$ is an open set in $\mathbb{R}^{n+k}$ containing $A$. If $g$ is a non-negative $\mathcal{H}^n$-measurable function on $A$, then
$$
\int_A g \ J_Af d\mathcal{H}^n = \int_{\mathbb{R}^{n+k_1}} \{\int_{A\cap f^{-1}(y)} g d\mathcal{H}^0\} d\mathcal{H}^{n}(y).
$$
\end{theorem}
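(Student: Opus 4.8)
The plan is to prove the formula by the two classical reductions underlying the rectifiable area formula: first reduce the statement on $A$ to the ``flat'' area formula for Lipschitz maps on subsets of $\mathbb{R}^n$, then establish the flat case, and finally transport it back by a bi-Lipschitz parametrization of $A$ together with the multiplicativity of Jacobians of linear maps. \emph{Step 1: structure of $A$.} Since $A$ is $\mathcal{H}^n$-measurable and countably $n$-rectifiable, the standard structure theory for rectifiable sets provides a disjoint decomposition $A = Z \cup \bigcup_{j=1}^{\infty} A_j$ with $\mathcal{H}^n(Z) = 0$ and, for each $j$, a bounded Borel set $B_j \subset \mathbb{R}^n$ and a bi-Lipschitz map $\psi_j : B_j \to \mathbb{R}^{n+k}$ with $\psi_j(B_j) = A_j$ and $\overline{A_j}$ compact in $V$. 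By Rademacher's theorem $\psi_j$ is differentiable at $\mathcal{L}^n$-a.e.\ $u \in B_j$, its differential $d\psi_j(u) : \mathbb{R}^n \to \mathbb{R}^{n+k}$ is then injective, the approximate tangent plane $T_x A = d\psi_j(u)(\mathbb{R}^n)$ exists for $\mathcal{H}^n$-a.e.\ $x = \psi_j(u) \in A_j$, and accordingly the tangential derivative $d^A f_x := df_x|_{T_x A}$ and the tangential Jacobian $J_A f(x) = \big(\det\big((d^A f_x)^{\ast} d^A f_x\big)\big)^{1/2}$ are defined $\mathcal{H}^n$-a.e.\ on $A$, so the left-hand side of the claimed identity is meaningful. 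Replacing $V$ by a countable union of relatively compact open subsets, we may also assume $f$ is Lipschitz on a neighborhood of each $\overline{A_j}$.

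\emph{Step 2: the flat case.} The key input is the area formula for a Lipschitz map $\phi : E \to \mathbb{R}^{n+k_1}$ with $E \subset \mathbb{R}^n$ measurable and $h \ge 0$ measurable,
$$\int_E h \, J\phi \, d\mathcal{L}^n = \int_{\mathbb{R}^{n+k_1}} \Big( \sum_{u \in E \cap \phi^{-1}(y)} h(u) \Big) d\mathcal{H}^n(y),$$
where $J\phi(u) = \big(\det(d\phi(u)^{\ast} d\phi(u))\big)^{1/2}$. On the set $\{J\phi = 0\}$ both sides vanish: the left trivially, the right because $\phi(\{J\phi = 0\})$ is $\mathcal{H}^n$-null (a Sard-type fact; alternatively apply Theorem \ref{volume} with $m=0$ after a further partition into bounded pieces). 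On $\{J\phi > 0\}$ one uses Rademacher's theorem together with a countable Borel partition $E \cap \{J\phi > 0\} = \bigsqcup_i E_i$ chosen so that on each $E_i$ the map $\phi$ is comparable, up to a factor $(1 \pm \epsilon)$, to an injective affine map $L_i$; for such a map one has the exact identity $\mathcal{H}^n(L(F)) = \big(\det(dL^{\ast} dL)\big)^{1/2}\,\mathcal{L}^n(F)$, so both sides of the display agree on $E_i$ up to an error controlled by $\epsilon$, and summing over $i$ and letting $\epsilon \to 0$ gives equality. This is exactly the classical area formula \cite[Section~8]{L83} (cf.\ \cite[3.2.3]{F69}), which I would cite rather than reprove.

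\emph{Step 3: transport and assembly.} For $\mathcal{L}^n$-a.e.\ $u \in B_j$ the chain rule for the $n$-dimensional Jacobian---valid because $d\psi_j(u)$ is an isomorphism onto $T_x A$---gives $J(f \circ \psi_j)(u) = J_A f(\psi_j(u)) \cdot J\psi_j(u)$, and the flat area formula applied to $\psi_j$ itself yields $\int_{A_j} F \, d\mathcal{H}^n = \int_{B_j} (F \circ \psi_j)\, J\psi_j \, d\mathcal{L}^n$ for Borel $F \ge 0$. Combining these with Step 2 applied to $\phi = f \circ \psi_j$ and $h = g \circ \psi_j$ (a Lipschitz map and a measurable function on $B_j$),
$$\int_{A_j} g \, J_A f \, d\mathcal{H}^n = \int_{B_j} (g \circ \psi_j)\, J(f \circ \psi_j)\, d\mathcal{L}^n = \int_{\mathbb{R}^{n+k_1}} \Big( \sum_{u \in B_j \cap (f\circ\psi_j)^{-1}(y)} g(\psi_j(u)) \Big) d\mathcal{H}^n(y).$$
Since $\psi_j$ is injective, $u \mapsto \psi_j(u)$ is a bijection of $B_j \cap (f\circ\psi_j)^{-1}(y)$ onto $A_j \cap f^{-1}(y)$, so the inner sum equals $\int_{A_j \cap f^{-1}(y)} g \, d\mathcal{H}^0$. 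Summing over $j$ and noting that $Z$ contributes nothing to either side---the left because $\mathcal{H}^n(Z) = 0$, the right because $\int_{\mathbb{R}^{n+k_1}}^{\ast} \mathcal{H}^0(Z \cap f^{-1}(y))\, d\mathcal{H}^n(y) \le C\, \mathcal{H}^n(Z) = 0$ by Theorem \ref{volume} (on each bounded piece where $f$ is Lipschitz)---and using monotone convergence to interchange $\sum_j$ with the integral and with $\int_{A \cap f^{-1}(y)} g\, d\mathcal{H}^0 = \sum_j \int_{A_j \cap f^{-1}(y)} g\, d\mathcal{H}^0$, we obtain the asserted identity.

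\emph{Main obstacle.} The substantive content is Step 2: extracting the countable ``almost affine'' decomposition from Rademacher's theorem, and, more delicately, verifying that the image of the critical set $\{J\phi = 0\}$ is $\mathcal{H}^n$-null and that the $(1 \pm \epsilon)$-errors do not accumulate when the pieces are reassembled. The accompanying measure-theoretic bookkeeping---Borel measurability of $y \mapsto \#(A \cap f^{-1}(y))$ and of the integrands involved, together with the reductions to relatively compact, bounded pieces---is routine once the decomposition of $A$ and the flat case are in hand.
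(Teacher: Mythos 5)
The paper does not prove this statement at all---it is quoted as the classical area formula for countably rectifiable sets from \cite[Section 12]{L83}---and your argument is precisely the standard proof of that classical result (decomposition of $A$ into bi-Lipschitz images of subsets of $\mathbb{R}^n$ plus an $\mathcal{H}^n$-null set, the flat Lipschitz area formula, and the chain rule for tangential Jacobians), which is correct in outline with the substantive inputs properly identified. The one slip is your parenthetical claim that Theorem \ref{volume} with $m=0$ disposes of the critical set $\{J\phi=0\}$: that inequality bounds the integrated multiplicity by $\mathcal{H}^n$ of the domain piece, which need not vanish on $\{J\phi=0\}$, so the vanishing of the right-hand side there genuinely requires the Sard-type perturbation argument (e.g.\ replacing $\phi$ by $(\phi,\epsilon\,\mathrm{id})$ and letting $\epsilon\to 0$) that you correctly name as the main obstacle.
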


\begin{proposition}\label{cauchy inequality for real rectifiable 1-current}
Let $\theta$ be a $\mathcal{H}^1$-integrable real-valued function on $\gamma$ where $\gamma : (0,1) \rightarrow \mathbb{C}^n$ is a $C^1$-curve  and consider the real rectifiable 1-current $\gamma\wedge\theta$ where the orientation of $\gamma$ is induced from the natural orientation of (0,1).
We have
$$
\int_{\gamma} |\theta||dz_1| \leq \M(\gamma\wedge\theta).
$$
\end{proposition}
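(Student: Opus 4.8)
The plan is to rewrite both sides as integrals over the parameter interval $(0,1)$ and then invoke the trivial pointwise bound $|\gamma_1'(t)|\le|\gamma'(t)|$. We may assume, as in the situation preceding the statement (where the curves $\gamma_j$ are images of $C^1$-embeddings $c_j$), that $\gamma$ is parametrized by a $C^1$-embedding of $(0,1)$; write $\gamma(t)=(\gamma_1(t),\dots,\gamma_n(t))$ and identify $\mathbb{C}^n$ with $\mathbb{R}^{2n}$, so that $|\gamma'(t)|=\big(\sum_{j=1}^n|\gamma_j'(t)|^2\big)^{1/2}$ is the speed of the curve. Since $\gamma\wedge\theta$ is the rectifiable $1$-current carried by the embedded $C^1$-curve $\gamma((0,1))$ with multiplicity $\theta$, its mass is $\M(\gamma\wedge\theta)=\int_{\gamma}|\theta|\,d\mathcal{H}^1$, exactly as recorded for the pieces $\gamma_j\wedge\theta_j$ above.

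First I would apply the area formula (Theorem \ref{area formula}) with $A=(0,1)\subset\mathbb{R}$, with the locally Lipschitz map $f=\gamma:(0,1)\to\mathbb{R}^{2n}$, and with the non-negative measurable function $g(t)=|\theta(\gamma(t))|$ on $A$. For a $C^1$-curve the $1$-dimensional Jacobian is the speed, $J_Af(t)=|\gamma'(t)|$, and since $\gamma$ is injective the fiber $\gamma^{-1}(y)$ consists of a single point for $y\in\gamma((0,1))$ and is empty otherwise; hence the inner $\mathcal{H}^0$-integral in the area formula equals $|\theta(y)|$ on $\gamma((0,1))$ and $0$ elsewhere. The area formula therefore gives
$$\int_0^1|\theta(\gamma(t))|\,|\gamma'(t)|\,dt=\int_{\gamma}|\theta|\,d\mathcal{H}^1=\M(\gamma\wedge\theta).$$

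On the other hand, by the definition of $\int_{\gamma}|\theta|\,|dz_1|$ we have $\int_{\gamma}|\theta|\,|dz_1|=\int_0^1|\theta(\gamma(t))|\,|\gamma_1'(t)|\,dt$. Since $|\gamma_1'(t)|^2\le\sum_{j=1}^n|\gamma_j'(t)|^2=|\gamma'(t)|^2$ for every $t\in(0,1)$, multiplying this pointwise bound by the non-negative factor $|\theta(\gamma(t))|$ and integrating over $(0,1)$ yields $\int_{\gamma}|\theta|\,|dz_1|\le\M(\gamma\wedge\theta)$, which is the assertion. There is no serious obstacle here; the statement is essentially bookkeeping, and the only points needing a moment of care are the identification of the Jacobian $J_Af$ appearing in Theorem \ref{area formula} with the Euclidean norm $|\gamma'(t)|$ of the velocity vector in $\mathbb{R}^{2n}$, the use of injectivity of the parametrization to evaluate the multiplicity function, and matching $\M(\gamma\wedge\theta)$ with $\int_{\gamma}|\theta|\,d\mathcal{H}^1$; once these are in place the inequality is immediate from $|\gamma_1'|\le|\gamma'|$.
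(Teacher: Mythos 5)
Your proof is correct, and it is a leaner route than the one the paper takes. The paper also starts from $\int_{\gamma}|\theta|\,|dz_1|=\int_0^1|\theta\circ\gamma(t)|\,|(\pi_1\circ\gamma)'(t)|\,dt$, but then applies the area formula to the (possibly non-injective) composite $\pi_1\circ\gamma:(0,1)\to\R^2$, identifies the resulting double integral with the mass of the pushforward varifold $\pi_{1*}(\gamma,|\theta|)$, applies the area formula a second time to rewrite that mass as $\int_{\gamma}(J_{\gamma}\pi_1)\,|\theta|\,d\mathcal{H}^1$, and finally uses $J_{\gamma}\pi_1\leq 1$. You instead apply the area formula only once, to the embedding $\gamma$ itself (where the fibers are singletons and the Jacobian is the speed $|\gamma'(t)|$), to identify $\int_0^1|\theta\circ\gamma|\,|\gamma'|\,dt$ with $\int_{\gamma}|\theta|\,d\mathcal{H}^1=\M(\gamma\wedge\theta)$, and obtain the inequality from the pointwise bound $|\gamma_1'(t)|\leq|\gamma'(t)|$. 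The two arguments are morally the same --- your pointwise bound is exactly the statement $J_{\gamma}\pi_1\leq 1$ read off in the parametrization, since $J_{\gamma}\pi_1(\gamma(t))=|\gamma_1'(t)|/|\gamma'(t)|$ --- but your version avoids the varifold pushforward and the second use of the area formula, so it is more elementary and self-contained. The one point you rightly flag, and which both proofs need, is that $\gamma$ is an injective $C^1$ parametrization; this is guaranteed in the context where the proposition is applied, as the curves $\gamma_j$ there are images of $C^1$-embeddings.
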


\begin{proof}
\begin{align*}
\int_{\gamma} |\theta||dz_1|  & = \int_{0}^{1} |\theta\circ\gamma(t)||\gamma_1^{\prime}(t)| \ dt= \int_{0}^{1} |\theta\circ\gamma(t)||(\pi_1\circ\gamma)^{\prime}(t)| \ dt \\
& = \int_{\R^2} \{ \int_{(\pi_1\circ\gamma)^{-1}(y)} |\theta\circ\gamma| \ d\mathcal{H}^0 \} \ d\mathcal{H}^1(y) \ (by \ Theorem \ \ref{area formula}) \\
& = \int_{\pi_1(\gamma)} \{ \int_{(\pi_1\circ\gamma)^{-1}(y)} |\theta\circ\gamma| \ d\mathcal{H}^0 \} \ d\mathcal{H}^1(y) \\
& = \M(\pi_{1\ast}(\gamma , |\theta|)) \ (Here \ (\gamma , |\theta|) \ is \ a \ varifold, see \ \cite[15.6]{L83}) \\
& = \int_{\gamma} (J_{\gamma}\pi_1) |\theta| \ d\mathcal{H}^1 \ (by \ Theorem \ \ref{area formula}, see \ \cite[15.7]{L83})\\
& \leq \int_{\gamma} |\theta| \ d\mathcal{H}^1 = \M(\gamma\wedge\theta).
\end{align*}
\end{proof}

We now consider the Cauchy transform of the 1-current $bT(r)$ in the coordinate function $z_1$ (see \cite[pg 8]{W}).
\begin{lemma}\label{bT(r) finite}
Define
$$
bT(r)(\frac{|dz_1|}{|z_1 - \alpha|}):= \sum^{\infty}_{j=1} \int_{\gamma_j} \frac{|\theta_j||dz_1|}{|z_1 - \alpha|}
$$
Then $bT(r)<\infty$ for $\mathcal{L}^2$-a.e. $\alpha$ in $\mathbb{C}$.
\end{lemma}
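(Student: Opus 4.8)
The plan is to estimate the integral of $bT(r)\bigl(\tfrac{|dz_1|}{|z_1-\alpha|}\bigr)$ over an arbitrary disk $B_R(0)$ in the $\alpha$-plane and then conclude by Tonelli's theorem. The one elementary fact needed is that for a fixed $w\in\C$ and any $R>0$ the singularity $\alpha=w$ is integrable in the plane, with the uniform bound
\[
\int_{B_R(0)}\frac{d\mathcal{L}^2(\alpha)}{|w-\alpha|}\;\le\;\int_{B_{R+|w|}(w)}\frac{d\mathcal{L}^2(\alpha)}{|w-\alpha|}\;=\;2\pi\,(R+|w|),
\]
obtained by passing to polar coordinates centered at $w$ and using $B_R(0)\subset B_{R+|w|}(w)$. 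Since $bT(r)$ is supported in $K\subset\partial\Delta'(r)\times\Delta''(\delta)$, every $z\in\mbox{spt}(bT(r))$ has $|z_1|=r$, so this bound applies with $w=z_1$, uniformly along every curve $\gamma_j$.

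First I would fix $R>0$ and integrate the defining series over $B_R(0)$. Every integrand is non-negative, and for each $j$ the map $(\alpha,t)\mapsto |\theta_j(\gamma_j(t))|\,|\gamma_{j,1}'(t)|/|\gamma_{j,1}(t)-\alpha|$ is a Borel function on $B_R(0)\times(0,1)$ (its polar set, being the preimage of the diagonal under a continuous map, is closed and $\mathcal{L}^3$-null there), so Tonelli's theorem lets me move $\int_{B_R(0)}d\mathcal{L}^2(\alpha)$ past the sum over $j$ and past each curve integral. Combining this with the displayed bound gives
\[
\int_{B_R(0)} bT(r)\Bigl(\tfrac{|dz_1|}{|z_1-\alpha|}\Bigr)\,d\mathcal{L}^2(\alpha)\;\le\;2\pi(R+r)\sum_{j=1}^{\infty}\int_{\gamma_j}|\theta_j|\,|dz_1|.
\]

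Next I would bound the remaining series using Proposition \ref{cauchy inequality for real rectifiable 1-current} termwise: $\int_{\gamma_j}|\theta_j|\,|dz_1|\le\M(\gamma_j\wedge\theta_j)$ for each $j$, hence $\sum_j\int_{\gamma_j}|\theta_j|\,|dz_1|\le\sum_j\M(\gamma_j\wedge\theta_j)=\M(bT(r))<\infty$. Therefore $\int_{B_R(0)} bT(r)\bigl(\tfrac{|dz_1|}{|z_1-\alpha|}\bigr)\,d\mathcal{L}^2(\alpha)\le 2\pi(R+r)\,\M(bT(r))<\infty$, which forces $bT(r)\bigl(\tfrac{|dz_1|}{|z_1-\alpha|}\bigr)<\infty$ for $\mathcal{L}^2$-a.e.\ $\alpha\in B_R(0)$. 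Letting $R$ run through the positive integers exhausts $\C$ and gives the lemma.

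There is no genuinely hard step here; the content is a Tonelli interchange together with the single radial estimate above. The only things to be careful about are the joint measurability needed to justify the interchange of the integral with the (a priori infinite) sum over $j$, and the use of the support condition, which is what makes $z_1$ bounded — in fact $|z_1|=r$ — on $\mbox{spt}(bT(r))$ and therefore makes the constant $2\pi(R+r)$ independent of $j$ and of the point along $\gamma_j$.
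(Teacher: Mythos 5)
Your proposal is correct and follows essentially the same route as the paper: a Tonelli/Fubini interchange, the polar-coordinate bound $\int_{B_\rho}\frac{d\mathcal{L}^2}{|z|}=2\pi\rho$ applied uniformly thanks to $|z_1|=r$ on $\mbox{spt}(bT(r))$, and Proposition \ref{cauchy inequality for real rectifiable 1-current} together with $\M(bT(r))<\infty$ to control the sum over $j$. Your write-up is in fact slightly more careful than the paper's (explicit summation over $j$ and exhaustion of $\C$ by disks), but there is no substantive difference in method.
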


\begin{proof}
Fix $R >0$ with $\pi(spt(bT(r))) \subset \pi(K) \subset B_R(0)$, by Fubini's theorem,
$$
\int_{|z_1| < R} \large\{ \int_{\gamma_j} \frac{|\theta_j||dz_1|}{|z_1 - \alpha|} \} dx\wedge dy =
\int_{\gamma_j} |\theta_j| \int_{|z_1| < R} \frac{dx\wedge dy}{|z_1 - \alpha|}|dz_1|
$$
For $\alpha \in \pi(spt(bT(r)))$ and $|z_1| < R$, $|z_1 - \alpha| < 2R$, then by making $z_1'=z_1-\alpha$, we have
$$
\int_{|z_1| < R} \frac{dx\wedge dy}{|z_1 - \alpha|} \leq \int_{|z_1^{\prime}| < 2R} \frac{dx^{\prime}\wedge dy^{\prime}}{|z_1^{\prime}|} = \int_{0}^{2R} rdr \int_{0}^{2\pi} \frac{d\theta}{r} = 4\pi R
$$
Hence by Proposition \ref{cauchy inequality for real rectifiable 1-current}(see \cite[Lemma 2.4]{W}),
$$
\int_{|z_1| < R} \large\{ \int_{\gamma_j} \frac{|\theta_j||dz_1|}{|z_1 - \alpha|} \} dxdy \leq 4\pi R \cdot \int_{\gamma_j}|\theta_j||dz_1|  \leq 4\pi R \cdot \M(\gamma_j\wedge\theta_j) < \infty
$$

\end{proof}

For almost all $\alpha$, it is a Federer's result that the slice $\langle T(r) , \pi , \alpha \rangle$ exists and is a real rectifiable 0-current, i.e., this slice is given by $\sum_{j=1}^{s} n_j[w_j]$ where  $w_j$'s are distinct points in $\Delta$ with $\pi(w_j) = \alpha$ and $n_j$'s are non-zero real numbers (see \cite[4.3.8]{F69}). Alexander proved a Cauchy formula in \cite[pg 125]{A97} for $bT(r)$ when $T$ is an integral current. We observe that his proof is actually valid for locally real rectifiable currents of type $(1, 1)$. We state his result in a more general form as follows.

\begin{theorem}(Alexander's Cauchy Formula)
Let $U\subset \C^n$ be an open set and $T\in RR^{loc}_{1, 1}(U)$ be a closed locally real rectifiable current of type $(1, 1)$ on $U$.
Fix $r, \delta>0$. Let $T(r):=T|_{U\cap(\Delta'(r)\times \Delta''(\delta))}$ and $\pi:\C^n\rightarrow \C$ be the projection $\pi(z_1, ..., z_n)=z_1$.
Fix $\alpha\in \C$. Suppose that the slice $bT(r)$ exists as a closed locally real rectifiable 1-current supported on $\partial(\Delta'(r)\times \Delta''(\delta))$,
\begin{enumerate}
\item
$bT(r)(\frac{|dz_1|}{|z_1 - \alpha|}) < \infty$
and
\item
the slice $\ \langle T(r) , \pi , \alpha \rangle$ exists and is equal to $\sum_{j=1}^{s} n_j[w_j]$ where
all $n_j$'s $\in \R$,
\end{enumerate}
then for all holomorphic functions $f$ in $\mathbb{C}^n$,
$$
bT(r)(\frac{fdz_1}{z_1 - \alpha}) = 2\pi i \langle T(r) , \pi , \alpha \rangle(f) = 2\pi i \sum_{j=1}^{s} n_jf(w_j).
$$
\end{theorem}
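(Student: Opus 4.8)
The plan is to carry out Alexander's Stokes--type computation from \cite[p.~125]{A97} against the Cauchy kernel, noting that it uses only the rectifiable structure and finite mass of the currents, never integrality. Write
\[
\eta:=\frac{f\,dz_1}{z_1-\alpha},
\]
a $1$-form whose coefficient is holomorphic off the hyperplane $H_\alpha:=\{z_1=\alpha\}$; it suffices to prove $bT(r)(\eta)=2\pi i\,\langle T(r),\pi,\alpha\rangle(f)$, the second equality in the statement being part of hypothesis~(2). First observe that $\mathrm{spt}(T(r))$ is compact: $\mathrm{spt}(T)\cap\bigl(\overline{\Delta'(r)}\times\overline{\Delta''(\delta)}\bigr)$ is closed in a compact set, hence compact, and by construction it avoids $\overline{\Delta'(r)}\times\partial\Delta''(\delta)$; therefore $\mathrm{spt}(bT(r))=\mathrm{spt}(\partial T(r))$ is compact too, and Stokes' theorem $bT(r)(\omega)=T(r)(d\omega)$ is available for every smooth $\omega$.

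Regularize the kernel: choose smooth $\chi_\epsi\colon\C\to[0,1]$ with $\chi_\epsi\equiv 0$ on $\{\abs{\zeta-\alpha}\le\epsi\}$, $\chi_\epsi\equiv 1$ on $\{\abs{\zeta-\alpha}\ge 2\epsi\}$ and $\abs{\bar\partial\chi_\epsi}\le C/\epsi$, and set $\eta_\epsi:=\chi_\epsi(z_1)\,\eta$, a globally smooth $1$-form. Stokes gives
\begin{equation}\label{cauchy-stokes}
bT(r)(\eta_\epsi)=T(r)(d\eta_\epsi)=T(r)\bigl(\chi_\epsi(z_1)\,d\eta\bigr)+T(r)\bigl(d\chi_\epsi(z_1)\wedge\eta\bigr).
\end{equation}
Off $H_\alpha$ the function $f/(z_1-\alpha)$ is holomorphic, so $d\eta=\partial\bigl(f/(z_1-\alpha)\bigr)\wedge dz_1$ is a smooth form of type $(2,0)$ there; hence $\chi_\epsi(z_1)\,d\eta$, which vanishes near $H_\alpha$, is a globally smooth $(2,0)$-form, and since $T(r)$ is of type $(1,1)$ the first term on the right of \eqref{cauchy-stokes} vanishes. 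For the second, $d\chi_\epsi(z_1)\wedge dz_1=\tfrac{\partial\chi_\epsi}{\partial\bar\zeta}(z_1)\,d\bar z_1\wedge dz_1$, so
\[
d\chi_\epsi(z_1)\wedge\eta=g_\epsi(z_1)\,f\,dx_1\wedge dy_1,\qquad g_\epsi(\zeta):=\frac{2i}{\zeta-\alpha}\,\frac{\partial\chi_\epsi}{\partial\bar\zeta}(\zeta),
\]
where $g_\epsi$ is smooth, supported in $\{\epsi\le\abs{\zeta-\alpha}\le 2\epsi\}$, satisfies $\abs{g_\epsi}\le C'/\epsi^2$, and $\int_\C g_\epsi\,d\mathcal{L}^2=2\pi i$ for every $\epsi$ (the last because $g_\epsi(\zeta)\,dx\wedge dy=d\bigl(\chi_\epsi(\zeta)\,d\zeta/(\zeta-\alpha)\bigr)$ and Stokes on an expanding disc leaves only $\int_{\abs{\zeta}=R}d\zeta/(\zeta-\alpha)=2\pi i$).

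Now apply Federer's slicing formula \cite[4.3]{F69}. Since $\pi^{\#}(dx\wedge dy)=dx_1\wedge dy_1$, for each $\epsi$
\[
T(r)\bigl(d\chi_\epsi(z_1)\wedge\eta\bigr)=\int_{\C}g_\epsi(\beta)\,\langle T(r),\pi,\beta\rangle(f)\,d\mathcal{L}^2(\beta).
\]
By the slice-mass inequality (Theorem~\ref{volume} applied to $T(r)$) the function $\beta\mapsto\M\langle T(r),\pi,\beta\rangle$ is locally $\mathcal{L}^2$-integrable, hence so is $h(\beta):=\langle T(r),\pi,\beta\rangle(f)$; and by hypothesis~(2) the slice genuinely exists at $\alpha$, which through the definition of the slice in \cite[4.3]{F69} makes $\alpha$ a point at which the averages of $h$ converge to $h(\alpha)$. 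As $\{g_\epsi\}$ is an approximate identity at $\alpha$ of total mass $2\pi i$, we get $\int_\C g_\epsi\,h\,d\mathcal{L}^2\to 2\pi i\,h(\alpha)=2\pi i\,\langle T(r),\pi,\alpha\rangle(f)$. Meanwhile $bT(r)(\eta_\epsi)\to bT(r)(\eta)$ by dominated convergence: writing $bT(r)=\sum_j\gamma_j\wedge\theta_j$ as in Lemma~\ref{bT(r) finite}, one has $\bigl|(\gamma_j\wedge\theta_j)(\eta_\epsi)\bigr|\le\sup_K\abs{f}\cdot\int_{\gamma_j}\abs{\theta_j}\,\abs{dz_1}/\abs{z_1-\alpha}$ with $K=\mathrm{spt}(bT(r))$, and the sum of the right-hand sides is finite by hypothesis~(1). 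Letting $\epsi\to 0$ in \eqref{cauchy-stokes} gives $bT(r)(\eta)=2\pi i\,\langle T(r),\pi,\alpha\rangle(f)=2\pi i\sum_{j=1}^{s}n_j f(w_j)$.

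The main obstacle is the penultimate step: that the concentrating family $g_\epsi$, tested against the slice function $h$ which is only defined $\mathcal{L}^2$-a.e.\ on $\C$, returns the value $2\pi i\,h(\alpha)$. This is precisely where the hypothesis that $\langle T(r),\pi,\alpha\rangle$ \emph{exists}---not merely for $\mathcal{L}^2$-a.e.\ parameter---must be used, and to exploit it one has to unwind Federer's definition of the slice as the limit of the normalized restrictions $(\pi\epsi^2)^{-1}\,T(r)\llcorner\pi^{-1}(B_\epsi(\alpha))\llcorner\pi^{\#}(dx\wedge dy)$ and verify that it supplies exactly the averaged limit required. The other ingredients---compactness of the supports, vanishing of the $(2,0)$-component against a $(1,1)$-current, and the dominated-convergence step fed by hypothesis~(1)---are routine.
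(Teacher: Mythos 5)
The paper does not actually prove this theorem: it states that Alexander's proof in \cite[pg 125]{A97} for integral currents ``is actually valid for locally real rectifiable currents of type $(1,1)$'' and leaves it at that. Your proposal is therefore a reconstruction of Alexander's Stokes-plus-regularization argument, and its architecture is sound: $\mathrm{spt}(T(r))$ is compact so $bT(r)=\partial T(r)$ and Stokes applies to the cut-off kernel; the $(2,0)$-component of $d\eta_\epsi$ dies against a bidimension-$(1,1)$ current; the remaining term is rewritten by Federer's slicing formula; and the left-hand side converges by dominated convergence using hypothesis (1). None of these steps uses integrality, which is the point of the generalization.

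The one step you flag as ``the main obstacle'' is, however, a genuine gap as written, and it is exactly the step that cannot be waved through. Existence of the slice $\langle T(r),\pi,\alpha\rangle$ in Federer's sense gives convergence of the normalized restrictions of $T(r)$ over the \emph{discs} $\pi^{-1}(B_t(\alpha))$, i.e.\ convergence of the ball averages of $h(\beta)=\langle T(r),\pi,\beta\rangle(f)$ at $\alpha$. For an arbitrary cutoff $\chi_\epsi$ your kernel $g_\epsi$ is a complex-valued, non-radial function supported on the annulus $\{\epsi\le|\zeta-\alpha|\le 2\epsi\}$ with only a mass bound $\int|g_\epsi|\le C$, and for such kernels $\int g_\epsi h\to(\int g_\epsi)h(\alpha)$ requires $\alpha$ to be a Lebesgue point of $h$ --- a strictly stronger property than convergence of the ball averages, and one that hypothesis (2) does not hand you. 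The fix is to make the choice of cutoff part of the argument: take $\chi_\epsi(\zeta)=\lambda_\epsi(|\zeta-\alpha|)$ radial. Then $\partial_{\bar\zeta}\chi_\epsi=\lambda_\epsi'(t)\,(\zeta-\alpha)/(2t)$ with $t=|\zeta-\alpha|$, so $g_\epsi(\zeta)=i\lambda_\epsi'(t)/t$ is $i$ times a nonnegative radial function of total integral $2\pi$. Writing $A(t)=\int_{B_t(\alpha)}h\,d\mathcal{L}^2$ and integrating by parts in $t$,
$$
\int_{\C}g_\epsi h\,d\mathcal{L}^2=i\int_0^\infty\frac{\lambda_\epsi'(t)}{t}A'(t)\,dt
=-i\pi\int_\epsi^{2\epsi}\Bigl(\frac{\lambda_\epsi'(t)}{t}\Bigr)'t^2\,\frac{A(t)}{\pi t^2}\,dt,
$$
and the measure $-i\pi\bigl(\lambda_\epsi'(t)/t\bigr)'t^2\,dt$ has uniformly bounded total variation and total integral $2\pi i$; since $A(t)/(\pi t^2)\to h(\alpha)$ uniformly on $[\epsi,2\epsi]$ as $\epsi\to 0$ by the existence of the slice, the limit $2\pi i\,h(\alpha)$ follows. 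With this modification (or with the equivalent device Alexander uses), your proof closes; without it, the penultimate paragraph is an assertion, not an argument.
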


\begin{remark}
The integral representing $bT(r)(\frac{fdz_1}{z_1 - \alpha})$ converges (absolutely) by Proposition $\ref{Cauchy inequality}$ and 1.
\end{remark}

\begin{lemma}
Suppose that $\alpha$ satisfies the hypothesis of the Alexander's Cauchy formula and the slice $\langle T(r) , \pi , \alpha \rangle = \sum_{j=1}^{s} n_j[w_j]$ is non-zero. Set $w = w_1$. Then there is a representing measure $\mu$ for $w$ for the uniform algebra $\mathcal{A}$ of functions holomorphic on $\Delta$ and continuous on $\Bar{\Delta}$ with $\mbox{supp}(\mu) \subset \partial\Delta$ and $\mu$ is concentrated on $N$, i.e., $\mu(N) = 1$. In particular, $\mbox{supp}(\mu) \subset K$.
\end{lemma}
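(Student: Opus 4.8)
The plan is to build $\mu$ from the Cauchy transform of the boundary slice $bT(r)$, pin down its action on holomorphic functions via Alexander's Cauchy Formula, and then deform it into a positive representing measure by isolating the single point $w_1$ and applying a power trick. Since the slice $\langle T(r),\pi,\alpha\rangle$ is nonzero, $\alpha=\pi(w_1)$ lies in the open disc $\{|z_1|<r\}$, while $\operatorname{spt}(bT(r))\subset K\subset\{|z_1|=r\}\times\Delta''(\delta)$; hence $|z_1-\alpha|\ge r-|\alpha|>0$ on $K$, and $g\mapsto\sigma(g):=\frac{1}{2\pi i}\,bT(r)\!\left(\frac{g\,dz_1}{z_1-\alpha}\right)$ defines a bounded linear functional on $C(\partial\Delta)$, i.e.\ a finite complex Radon measure on $\partial\Delta$ supported in $K$. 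Using $bT(r)=\sum_j\gamma_j\wedge\theta_j$, $\|bT(r)\|=\sum_j\mathcal H^1\lfloor|\theta_j|$ and the pointwise comass bound $|dz_1(\vec T)|\le 1$ for unit tangent vectors $\vec T$, one gets $|\sigma|\ll\|bT(r)\|$; since $\|bT(r)\|$ is concentrated on $N$ (and $N\subset\operatorname{spt}(bT(r))\subset K$), so is $\sigma$.

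By Alexander's Cauchy Formula, $\sigma(f)=\sum_{j=1}^s n_j f(w_j)$ for every entire $f$; since $\Delta$ is a polydisc, every $f\in\mathcal A$ is a uniform limit on $\overline\Delta$ of polynomials (dilate and Taylor-expand), and both sides are $\|\cdot\|_{\overline\Delta}$-continuous in $f$, so the identity holds for all $f\in\mathcal A$. Pick a polynomial $g_1$ with $g_1(w_1)=1$ and $g_1(w_j)=0$ for $2\le j\le s$ (the $w_j$ are distinct), and set $\mu_1:=n_1^{-1}g_1\,\sigma$. Then $\mu_1$ is a finite complex measure, still concentrated on $N$, and for every $f\in\mathcal A$,
$$\int f\,d\mu_1=n_1^{-1}\sigma(g_1 f)=n_1^{-1}\sum_{j=1}^s n_j g_1(w_j)f(w_j)=f(w_1);$$
in particular $\mu_1(N)=\mu_1(\partial\Delta)=1$.

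It remains to replace $\mu_1$ by a \emph{positive} measure with the same action on $\mathcal A$, still concentrated on $N$. Applying $\left|\int f^p\,d\mu_1\right|\le\|\mu_1\|\,(\sup_N|f|)^p$ for all integers $p$ and letting $p\to\infty$ gives $|f(w_1)|\le\sup_N|f|$ for $f\in\mathcal A$; refining with inner regularity of $|\mu_1|$ (take compacta $F_1\subset F_2\subset\cdots\subset N$ with $|\mu_1|(N\setminus F_m)\to0$, put $E:=\bigcup_m F_m$, split the integral over $F_m$ and $N\setminus F_m$, and let $p\to\infty$ then $m\to\infty$) improves this to $|f(w_1)|\le\sup_E|f|$, where $E\subset N$ is $\sigma$-compact with $|\mu_1|(N\setminus E)=0$. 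Hence $f|_{\overline E}\mapsto f(w_1)$ is a well-defined, norm-$\le 1$ functional on $\mathcal A|_{\overline E}\subset C(\overline E)$ sending $1$ to $1$; extending it by Hahn--Banach to $C(\overline E)$ and applying the Riesz representation theorem together with the state characterization produces a probability measure $\mu$ on $\overline E\subset K$ with $\int f\,d\mu=f(w_1)$ for all $f\in\mathcal A$, i.e.\ a representing measure for $w=w_1$ with $\operatorname{supp}\mu\subset K$. That $\mu(N)=1$ (equivalently $\mu(\overline E\setminus E)=0$, since $E\subset N$) follows by running the argument inside $L^1(|\mu_1|)$ and invoking an abstract F.\ and M.\ Riesz type argument to arrange $\mu\ll|\mu_1|$, so that $\mu$ inherits concentration on $N$; then $\operatorname{supp}\mu\subset\overline N\subset K$ because $N\subset\operatorname{spt}(bT(r))\subset K$.

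I expect the main obstacle to be exactly this last point: turning the complex measure $\mu_1$ into a genuine positive probability measure representing $w_1$ while preventing any mass from leaking onto $\overline N\setminus N$ (equivalently onto $K\setminus N$). The power trick disposes of positivity and normalization cleanly once the mass of $|\mu_1|$ has been localized onto a $\sigma$-compact subset of $N$; the delicate part is arranging the Hahn--Banach extension to be absolutely continuous with respect to $|\mu_1|$, and hence concentrated on the (non-closed) set $N$.
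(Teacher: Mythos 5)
Your construction of the complex representing measure is exactly the paper's: the paper also forms $d\sigma=\frac{1}{2\pi i n_1}\sum_j\frac{\theta_j g\,dz_1}{z_1-\alpha}\big|_{\gamma_j}$ (your $\mu_1$), uses the interpolating polynomial $g$ with $g(w_1)=1$, $g(w_j)=0$ for $j>1$, and extends the identity $\int f\,d\sigma=f(w)$ from polynomials to $\mathcal{A}$ by the same dilation argument $h_n(z)=h((1-\tfrac1n)z)$. Up to that point your argument is correct (modulo one slip: in your ``refinement'' you must let $m\to\infty$ \emph{before} $p\to\infty$, since $|\mu_1|(N\setminus F_m)^{1/p}\to 1$ for fixed $m$; or, more simply, integrate $|f|^p$ directly over $E$ using $|\mu_1|(N\setminus E)=0$, with no splitting at all).

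The genuine gap is in the final step, and it is exactly where you flag it. Your Hahn--Banach extension of $f|_{\overline E}\mapsto f(w_1)$ produces a probability measure on $\overline E\subset K$ representing $w$, which suffices for $\mathrm{supp}(\mu)\subset K$, but it gives no control whatsoever over $\mu(\overline E\setminus E)$: the extension is non-constructive and nothing forces $\mu\ll|\mu_1|$, so the claim $\mu(N)=1$ is not established. Your proposed repair --- ``running the argument inside $L^1(|\mu_1|)$ and invoking an abstract F.\ and M.\ Riesz type argument'' --- is not a proof; it is precisely the nontrivial theorem that the paper cites at this point, namely Gamelin, \emph{Uniform Algebras}, Theorem II.2.2 (Hoffman--Wermer): every complex representing measure $\sigma$ for a point of the maximal ideal space dominates a non-negative representing measure $\mu\ll\sigma$. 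Once that theorem is invoked, the Hahn--Banach detour is unnecessary: $\mu\ll\sigma$ immediately yields $\mu(N)=1$ because $|\sigma|\ll\|bT(r)\|=\sum_j\mathcal{H}^1\lfloor|\theta_j|$ is concentrated on $N$ by construction of the $\theta_j$, and then $\mathrm{supp}(\mu)\subset\overline N\subset K$. So either cite that theorem explicitly (as the paper does, together with Alexander's Lemma 6 for the statement $\mu(N)=1$) or supply its proof; as written, the positivity-plus-concentration step is missing.
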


\begin{proof}
Choose a polynomial $g$ such that $g(w_1) = 1$ and $g(w_j) = 0$ for $j > 1$. Then for every polynomial $f$, by the Alexander's Cauchy formula, we have
$$
bT(r)(\frac{fgdz_1}{z_1 - \alpha}) = 2\pi i \sum^{\infty}_{j=1} n_jf(w_j)g(w_j) = 2\pi in_1f(w).
$$
Hence $f(w) = \int_N fd\sigma$ where
$$
d\sigma = \frac{1}{2\pi in_1} \sum^{\infty}_{j=1} \frac{\theta_jgdz_1}{z_1 - \alpha} |_{\gamma_j}.
$$
For any $h \in \mathcal{A}$ and $n\in \N$, the function $h_n(z):= h((1-\frac{1}{n})z)$ is holomorphic on a neighborhood of $\Bar{\Delta}$ for all $n \in \mathbb{N}$. So each $h_n$ can be approximated uniformly by polynomials on $\Bar{\Delta}$. Since $\{h_n\}^{\infty}_{n=1}$ converges to $h$ uniformly on $\Bar{\Delta}$, by diagonal process and triangle inequality, $h$ can be approximated uniformly by polynomials on $\Bar{\Delta}$.
Thus, $\sigma$ is a complex representing measure for $w$ concentrated on $N$. By \cite[Theorem \uppercase\expandafter{\romannumeral 2}.2.2]{G}, there exists a non-negative representing measure $\mu$ for $w$ which is absolutely continuous with respect to $\sigma$. Also, $\mu(N) = 1$ (see \cite[Lemma 6]{A97}).

\end{proof}

Let $\Delta(a,\lambda)$ be the open polydisc in $\mathbb{C}^n$ with center $a \in \mathbb{C}^n$ and radius $\lambda > 0$, and let $\omega$ be the standard K\"ahler form on $\mathbb{C}^n$.

\begin{lemma}\label{nonzero}
For a non-zero $R \in RR^{loc}_{k,k}(U)$ with $0 \in spt(R)$, there exists a $j$ satisfying $1 \leq j \leq n$ and a measurable set $E \subset \mathbb{C}$ of positive $\mathcal{L}^2$-measure such that the slice $\langle R|_{\Delta(0,\frac{\delta}{4})}, \pi_j , \alpha \rangle$ exists and is non-zero for all $\alpha \in E$.
\end{lemma}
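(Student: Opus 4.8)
The plan is to reduce the statement to a property of the approximate tangent planes of the rectifiable current $R|_{\Delta(0,\frac{\delta}{4})}$, and then to produce $E$ from the coarea formula.

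First, since $0\in\mbox{spt}(R)$, the restriction $S:=R|_{\Delta(0,\frac{\delta}{4})}$ is nonzero, and being locally real rectifiable it is a real rectifiable $2k$-current; write $S=\tau(W,\theta,\vec S)$ with $W$ a countably $2k$-rectifiable subset of $\Delta(0,\frac{\delta}{4})$, $\theta>0$ $\mathcal{H}^{2k}$-a.e.\ on $W$ and, after discarding the $\{\theta=0\}$ part, $\mathcal{H}^{2k}(W)>0$. The key step is to observe that, because $S$ is of type $(k,k)$, for $\mathcal{H}^{2k}$-a.e.\ $x\in W$ the approximate tangent plane $T_xW$ is a complex $k$-dimensional linear subspace of $\C^n$: for such $x$ the orienting simple unit $2k$-vector $\vec S(x)$ is of type $(k,k)$, and since the complex structure $J$ of $\C^n$ acts on each $\Lambda^{p,q}$ by a power of $i$, hence as the identity on $\Lambda^{k,k}$, we get $J_{\ast}\vec S(x)=\vec S(x)$; as $J$ is orthogonal this forces $J(T_xW)=T_xW$, and a $J$-invariant real subspace of $\C^n$ is a complex subspace of half the real dimension.

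Next, fix such an $x$. Since $k\geq1$, $T_xW$ is a nonzero complex subspace, so it contains a vector with a nonvanishing coordinate, and therefore at least one projection $\pi_j:\C^n\to\C$ ($1\le j\le n$) restricts to a surjective (hence real rank $2$) linear map on $T_xW$. Putting $B_j:=\{x\in W: d\pi_j(x)|_{T_xW}\text{ is onto}\}$ --- a measurable, hence $2k$-rectifiable, subset of $W$ --- this shows $W=\bigcup_{j=1}^{n}B_j$ up to an $\mathcal{H}^{2k}$-null set, so $\mathcal{H}^{2k}(B_{j_0})>0$ for some index $j_0$, which will be the $j$ of the statement.

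Finally, I would apply the coarea formula for rectifiable sets (\cite{L83}, \cite{F69}) to $\pi_{j_0}:\C^n\to\C\cong\R^2$ and the set $B_{j_0}$:
$$\int_{B_{j_0}}J_2^W\pi_{j_0}(x)\,d\mathcal{H}^{2k}(x)=\int_{\C}\mathcal{H}^{2k-2}\bigl(B_{j_0}\cap\pi_{j_0}^{-1}(\alpha)\bigr)\,d\mathcal{L}^2(\alpha),$$
where the $2$-dimensional coarea Jacobian $J_2^W\pi_{j_0}$ is strictly positive exactly on $B_{j_0}$; since $\mathcal{H}^{2k}(B_{j_0})>0$ the left side is positive, so $E:=\{\alpha\in\C:\mathcal{H}^{2k-2}(B_{j_0}\cap\pi_{j_0}^{-1}(\alpha))>0\}$ has $\mathcal{L}^2(E)>0$. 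By Federer's slicing theory \cite[4.3]{F69}, for $\mathcal{L}^2$-a.e.\ $\alpha$ the slice $\langle S,\pi_{j_0},\alpha\rangle$ exists and is the real rectifiable $(2k-2)$-current carried by $W\cap\pi_{j_0}^{-1}(\alpha)$ with multiplicity $\theta$ and with no contribution from points where the coarea Jacobian vanishes, and the displayed identity applied to $\mathbf{1}_{\{\theta=0\}}$ shows that for a.e.\ $\alpha$ one has $\theta>0$ $\mathcal{H}^{2k-2}$-a.e.\ on $W\cap\pi_{j_0}^{-1}(\alpha)$. Hence for a.e.\ $\alpha\in E$,
$$\M\bigl(\langle S,\pi_{j_0},\alpha\rangle\bigr)=\int_{B_{j_0}\cap\pi_{j_0}^{-1}(\alpha)}\theta\,d\mathcal{H}^{2k-2}>0,$$
so $\langle R|_{\Delta(0,\frac{\delta}{4})},\pi_{j_0},\alpha\rangle\neq0$ for all $\alpha$ in a positive-measure subset of $E$, which is the conclusion. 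I expect the only delicate point to be the complex-tangent-plane step: without the type $(k,k)$ hypothesis the coordinate projections may all degenerate on the tangent planes --- as they do, e.g., on a piece of the real $2k$-plane $\mathrm{span}_{\R}(\mathrm{Re}\,e_1,\dots,\mathrm{Re}\,e_{2k})$ --- and the slicing argument would break; the remaining steps are routine.
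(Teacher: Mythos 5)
Your argument is correct and essentially self-contained, which is more than the paper itself offers: the paper's proof of Lemma \ref{nonzero} is a one-line citation of Alexander's Lemmas 7 and 11 together with the remark that the Federer results they rest on (\cite[4.3.8]{F69} and \cite[4.3.2(1)]{F69}) remain valid for locally real rectifiable currents. The mechanism is the same in both treatments --- bidimension $(k,k)$ forces the approximate tangent planes of the carrying rectifiable set to be complex $k$-planes $\mathcal{H}^{2k}$-a.e., and a nonzero complex $k$-plane cannot degenerate under every coordinate projection $\pi_j$ --- but you implement it at the level of sets, via the coarea formula for rectifiable sets, whereas Alexander works at the level of currents, using the identity $\int \langle T , f , a \rangle(\varphi)\, d\mathcal{L}^2(a) = (T\wedge f^{\ast}\omega)(\varphi)$ of \cite[4.3.2(1)]{F69}. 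Your route has the advantage of making explicit exactly where the type hypothesis enters (your closing caveat about totally real $2k$-planes is the right one); the costs are (i) that the pointwise statement $\vec S(x)\in\Lambda_{k,k}$ must be extracted from the distributional hypothesis that $S$ annihilates all $(p,q)$-forms with $(p,q)\neq(k,k)$ --- a routine localization against constant-coefficient forms times cutoffs, but worth a sentence --- and (ii) that the identification of $\langle S,\pi_{j_0},\alpha\rangle$ with the rectifiable current carried by $W\cap\pi_{j_0}^{-1}(\alpha)$ with multiplicity $\theta$ is itself the content of \cite[4.3.8]{F69}, so you have not bypassed the slicing theory, only the current-level coarea identity. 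Two cosmetic points: $E$ should be shrunk by the null set of $\alpha$ for which the slice fails to exist or to have the stated form, and the mass formula should carry $|\theta|$ rather than $\theta$ if the multiplicity is signed; neither affects the conclusion.
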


\begin{proof}
See \cite[Lemma 7, Lemma 11]{A97}. The proof mainly applies \cite[4.3.8]{F69} and \cite[4.3.2 (1)]{F69} which are also true for locally real rectifiable currents.
\end{proof}

Let $\widehat{A}$ denote the polynomial convex hull of a set $A\subset \C^N$. The following result is from \cite[Lemma 2]{A1}.
\begin{lemma}\label{A1}
Let $D$ be a closed Jordan domain in $\mathbb{C}$ with rectifiable boundary, $K$ a compact subset of $\partial D$ of positive linear measure, $Q$ a polynomial convex set in $\mathbb{C}^n$, $f$ a polynomial in $\mathbb{C}^n$, and $s$ a positive integer. Assume that $Q = \reallywidehat{\ (f^{-1}(\partial D)\cap Q) \ }$ and $f|_{Q}$ is at most $s-to-1$ over points of $K$ (i.e., if $\lambda \in K$, then $f^{-1}(\lambda)\cap Q$ has at most $s$ points). Then $f^{-1}(int(D))\cap Q$ is a (possibly empty) pure 1-dimensional holomorphic subvariety of $f^{-1}(int(D))$.
\end{lemma}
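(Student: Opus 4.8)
The plan is to run the classical Bishop--Stolzenberg--Alexander machinery for producing analytic structure in polynomial hulls: first normalize $D$ to the unit disc, then show that $f$ has uniformly bounded finite fibres over the open disc, and finally assemble the subvariety from the elementary symmetric functions of those fibres.

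\textbf{Step 1 (reduction to the disc).} I would first reduce to $D=\overline{\Delta}$, $\Delta$ the open unit disc. Let $\phi\colon\overline{\Delta}\to D$ be the Riemann map; by Carath\'eodory's theorem it is a homeomorphism of the closures, and since $\partial D$ is rectifiable one has $\phi'\in H^1(\Delta)$ with $\phi'\neq 0$ a.e.\ on $\partial\Delta$ by the F.\ and M.\ Riesz theorem, so $\phi$ and $\phi^{-1}$ transport boundary sets of positive linear measure to boundary sets of positive linear measure; in particular $K':=\phi^{-1}(K)\subset\partial\Delta$ has positive length. Put $g:=\phi^{-1}\circ f$. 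By Mergelyan's theorem on the Jordan domain $D$ (whose complement is connected), $\phi^{-1}$ is a uniform limit of polynomials on $D$, hence, as $f(Q)\subseteq D$, $g$ is a uniform limit of polynomials in $f$ on $Q$; thus $g\in P(Q)$, $g^{-1}(\partial\Delta)\cap Q=f^{-1}(\partial D)\cap Q$, $Q=\widehat{g^{-1}(\partial\Delta)\cap Q}$, $g$ is at most $s$-to-$1$ over $K'$, and $g^{-1}(\Delta)\cap Q=f^{-1}(\operatorname{int}(D))\cap Q$ as the same open set, so the conclusion for $g$ yields it for $f$. Relabelling, I may assume $D=\overline{\Delta}$ and $f\in P(Q)$ with $\|f\|_Q\le 1$ (polynomiality was used only through $f\in P(Q)$). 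Write $X:=f^{-1}(\partial\Delta)\cap Q$, so $Q=\widehat X$; then $f(Q)\subseteq\widehat{f(X)}\subseteq\widehat{\partial\Delta}=\overline{\Delta}$, and since $\widehat X=Q$ every polynomial has the same sup-norm on $X$ and on $Q$, so $X$ contains the Shilov boundary of $P(Q)$ and every point of $Q$ carries a Jensen measure supported in $X$.

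\textbf{Step 2 (the crux: bounded fibres over the interior).} Next I would show there is a finite bound $N$ (one may take $N=s$) with $\#\bigl(f^{-1}(\lambda)\cap Q\bigr)\le N$ for every $\lambda\in\Delta$. Given $w\in Q$ with $f(w)=\lambda\in\Delta$ and a Jensen measure $\mu_w$ for $w$ supported in $X$, the push-forward $f_*\mu_w$ is a Jensen measure for $\lambda$ with respect to the disc algebra $A(\Delta)$, supported on $f(X)\subseteq\partial\Delta$; the positivity of $\mathcal H^1(K')$ together with the $s$-to-$1$ hypothesis over $K'$ then forces, through a potential-theoretic normal-families argument --- the ``number of sheets'' of $f$ over $\lambda\in\Delta$, counted with multiplicity, being locally constant off a polar set, coinciding a.e.\ with its boundary value, which is $\le s$ on $K'$, and being upper semicontinuous in the hull fibre --- that no fibre over $\Delta$ can have more than $s$ points. \textbf{This step is the main obstacle}: it is precisely here that the polynomial-convexity hypothesis $Q=\widehat X$, the rectifiability of $\partial D$, and the positivity of $\mathcal H^1(K)$ are all indispensable; dropping any one of them allows the interior fibres to be arbitrarily large, and reconstructing this estimate cleanly is the technical core of Alexander's \cite[Lemma 2]{A1}.

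\textbf{Step 3 (assembling the variety).} With uniformly bounded finite fibres in hand, the rest is the standard analytic-structure machine. Fix polynomials $h_1,\dots,h_n$ separating points of $\C^n$ (the coordinate functions will do). Off a polar, hence area-null, ``branch set'' the map $f$ is an honest $d$-sheeted cover, for a fixed $d\le s$, of $Q\cap f^{-1}(\cdot)$, and the elementary symmetric functions $c_{\ell,1}(\lambda),\dots,c_{\ell,d}(\lambda)$ of $\{h_\ell(x):x\in f^{-1}(\lambda)\cap Q\}$ counted with multiplicity are bounded and holomorphic there; boundedness lets them extend holomorphically across the removable branch set to all of $\Delta$, with boundary values over $K'$ matching the symmetric functions of the genuine finite boundary fibres. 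Then $f^{-1}(\Delta)\cap Q$ is exactly the common zero set, inside the open set $f^{-1}(\Delta)$, of the holomorphic Weierstrass-type functions $W_\ell(w):=h_\ell(w)^{d}-c_{\ell,1}(f(w))h_\ell(w)^{d-1}+\cdots\pm c_{\ell,d}(f(w))$; hence it is a holomorphic subvariety of $f^{-1}(\operatorname{int}(D))$, and being a branched cover of $\Delta$ of degree $d$ with no isolated points it is purely $1$-dimensional (or empty). Transporting back through $\phi$ gives the lemma.
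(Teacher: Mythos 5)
The paper does not prove this lemma at all: it is imported verbatim as \cite[Lemma 2]{A1}, so the only fair comparison is with Alexander's original argument, whose architecture your outline correctly reproduces (conformal reduction to the disc, a fibre bound over the interior, then the elementary-symmetric-function construction of the variety). Steps 1 and 3 are sound and standard: the Carath\'eodory/F.~and~M.~Riesz facts you invoke do transport positive linear measure on a rectifiable Jordan curve to positive arclength on $\partial\Delta$ and back, Mergelyan does put $\phi^{-1}\circ f$ in $P(Q)$, and once one knows the fibres $f^{-1}(\lambda)\cap Q$ are finite of uniformly bounded cardinality for $\lambda$ in a set of positive planar measure in $\Delta$, the Bishop--Wermer machine (symmetric functions bounded and holomorphic off an exceptional set, removable by boundedness) does produce the pure one-dimensional subvariety.

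The genuine gap is Step 2, and you have named it yourself without closing it. The entire content of Alexander's lemma --- and the reason it is not simply a corollary of Bishop's analytic structure theorem --- is the passage from ``$f|_Q$ is at most $s$-to-$1$ over a subset $K\subset\partial D$ of positive \emph{linear} measure'' to ``the fibres of $f|_Q$ over a subset of $\operatorname{int}(D)$ of positive \emph{planar} measure are finite and uniformly bounded.'' Your sketch of this step (``a potential-theoretic normal-families argument --- the number of sheets \dots being locally constant off a polar set, coinciding a.e.\ with its boundary value'') presupposes that the fibre count over $\Delta$ is already known to be a finite, quasi-everywhere defined, boundary-controlled multiplicity function; but that is precisely what must be established, and nothing in the proposal establishes even finiteness of a single interior fibre. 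The correct observation that each $w\in f^{-1}(\lambda)\cap Q$ carries a Jensen measure on $X=f^{-1}(\partial\Delta)\cap Q$ pushing forward to harmonic measure (hence charging $f^{-1}(K')$ with a fixed positive mass) does not by itself bound the number of such $w$, since the Jensen measures of distinct points need not be close to mutually singular; extracting the bound from the $s$-to-$1$ hypothesis on the boundary fibres is the delicate combinatorial/potential-theoretic core of \cite[Lemma 2]{A1}. As written, the proposal is a correct road map with its central pillar missing, so it cannot be accepted as a proof.
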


\begin{lemma}\label{1}
For coordinates and the polydisc $\Delta$ chosen as in section 1 for $k = 1$, there exists an 1-dimensional subvariety $V$ of $\Delta$ such that for $\mathcal{L}^2$-a.e. $\alpha \in \{ \lambda \in \mathbb{C} : |\lambda| < r \}$, the slice $\langle T(r) , \pi , \alpha \rangle$ exists, and is a real holomorphic 0-chain in $\Delta$ with support in $V$.
\end{lemma}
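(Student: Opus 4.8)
The plan is to take $V$ to be the part over the open disc of the polynomial hull $\widehat{K}$ of the compact set $K=\mbox{spt}(T)\cap(\partial\Delta'(r)\times\Delta''(\delta))$, and to recognise it as a subvariety by an application of Lemma \ref{A1}. Recall $K$ is compact (since $\mbox{spt}(T)$ misses $\overline{\Delta'(r)}\times\partial\Delta''(\delta)$) and $\mathcal{H}^1(K)<\infty$, as arranged in Section 2. Since $\pi(\widehat{K})\subseteq\widehat{\pi(K)}\subseteq\overline{\Delta'(r)}$ and $\pi''(\widehat{K})\subseteq\widehat{\pi''(K)}$, a compact subset of $\Delta''(\delta)$, the set $V:=\widehat{K}\cap\pi^{-1}(\Delta'(r))$ lies in $\Delta$, and $\Delta\setminus V=\Delta\setminus\widehat{K}$ is open, so $V$ is relatively closed in $\Delta$. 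I will show (A) that $V$ is a (possibly empty) pure $1$-dimensional holomorphic subvariety of $\Delta$, and (B) that for $\mathcal{L}^2$-a.e.\ $\alpha$ the slice $\langle T(r),\pi,\alpha\rangle$ exists and is supported in $V$; together with the description of such slices already recorded, this yields the lemma, the case $V=\varnothing$ being trivial.

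For (A), I apply Lemma \ref{A1} with $D=\overline{\Delta'(r)}$ (a closed Jordan domain with rectifiable boundary), $Q=\widehat{K}$, $f=\pi$ (a polynomial, $\pi(z)=z_1$), and an integer $m$ playing the role of $s$ there. The self-hull condition $Q=\widehat{f^{-1}(\partial D)\cap Q}$ holds because $K\subseteq\pi^{-1}(\partial\Delta'(r))\cap\widehat{K}\subseteq\widehat{K}$, so taking hulls gives $\widehat{K}\subseteq\widehat{\pi^{-1}(\partial\Delta'(r))\cap\widehat{K}}\subseteq\widehat{\widehat{K}}=\widehat{K}$. The crux is to produce a compact $K'\subseteq\partial\Delta'(r)$ of positive $\mathcal{H}^1$-measure and an $m$ with $\pi|_{\widehat{K}}$ at most $m$-to-$1$ over $K'$. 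For this I use that over a boundary point of the disc the fibre of the hull equals the hull of the fibre: for $\lambda\in\partial\Delta'(r)$,
$$\widehat{K}\cap\pi^{-1}(\lambda)=\widehat{K\cap\pi^{-1}(\lambda)}.$$
The inclusion $\supseteq$ is immediate since $\widehat{K}\cap\pi^{-1}(\lambda)$ is polynomially convex; for $\subseteq$ one takes an affine $\ell$ realizing a supporting half-plane of $D$ at $\lambda$ (so $\ell(\lambda)=0$ and $\mathrm{Re}\,\ell<0$ on $\overline{D}\setminus\{\lambda\}$) and tests a point $z_0$ of the left-hand side against a polynomial $p$ by means of the entire functions $p\cdot e^{N(\ell\circ\pi)}$, whose sup over $\widehat{K}$ equals their sup over $K$ and tends, as $N\to\infty$, to $\sup\{|p(z)|:z\in K,\ \pi(z)=\lambda\}$. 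Now Theorem \ref{volume} applied to $\pi|_K$ gives $\int^{\ast}_{\partial\Delta'(r)}\mathcal{H}^0(K\cap\pi^{-1}(\lambda))\,d\mathcal{H}^1(\lambda)<\infty$, so $g(\lambda):=\#(K\cap\pi^{-1}(\lambda))$ is finite for $\mathcal{H}^1$-a.e.\ $\lambda$; for such $\lambda$ the finite set $K\cap\pi^{-1}(\lambda)$ is its own hull, so $\widehat{K}\cap\pi^{-1}(\lambda)=K\cap\pi^{-1}(\lambda)$ has exactly $g(\lambda)$ points. Since $\{g\le m\}$ increases with $m$ to a subset of $\partial\Delta'(r)$ of full $\mathcal{H}^1$-measure, fix $m$ with $\mathcal{H}^1(\{g\le m\})>0$ and take, by inner regularity, a compact $K'\subseteq\{g\le m\}$ with $\mathcal{H}^1(K')>0$. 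Then $\pi|_{\widehat{K}}$ is at most $m$-to-$1$ over $K'$, and Lemma \ref{A1} gives that $V=\pi^{-1}(\Delta'(r))\cap\widehat{K}$ is a (possibly empty) pure $1$-dimensional holomorphic subvariety of $\pi^{-1}(\Delta'(r))$, hence of $\Delta$.

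For (B), by Lemma \ref{bT(r) finite} and Federer's slicing theorem (\cite[4.3.8]{F69}, valid for locally real rectifiable currents as in the proof of Lemma \ref{nonzero}), for $\mathcal{L}^2$-a.e.\ $\alpha\in\Delta'(r)$ the hypotheses of Alexander's Cauchy formula are met and $\langle T(r),\pi,\alpha\rangle=\sum_{j=1}^{s}n_j[w_j]$ with $n_j\in\R\setminus\{0\}$, the $w_j$ distinct, and $\pi(w_j)=\alpha$. Fixing such an $\alpha$ and an index $j_0$, and choosing a polynomial $g$ with $g(w_{j_0})=1$ and $g(w_j)=0$ for $j\ne j_0$, the argument of the representing-measure lemma above (now with $w_{j_0}$ in place of $w_1$) yields a representing measure $\mu$ for $w_{j_0}$ for the algebra $\mathcal{A}$ with $\mbox{supp}(\mu)\subseteq K$; hence $|q(w_{j_0})|\le\|q\|_K$ for every polynomial $q$, i.e.\ $w_{j_0}\in\widehat{K}$, and as $\pi(w_{j_0})=\alpha\in\Delta'(r)$ we get $w_{j_0}\in V$. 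Therefore $\langle T(r),\pi,\alpha\rangle$ is a finite real combination of point masses at points of $V$, that is, a real holomorphic $0$-chain in $\Delta$ with support in $V$, for $\mathcal{L}^2$-a.e.\ $\alpha\in\{\lambda\in\C:|\lambda|<r\}$.

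The step I expect to be the main obstacle is the second paragraph: securing a uniform bound on the number of sheets of $\widehat{K}$ over a positive-measure subset of the circle $\partial\Delta'(r)$, which combines the "fibre of the hull equals hull of the fibre over boundary points" fact with the slicing inequality of Theorem \ref{volume}. Once this bound is in hand, Lemma \ref{A1} supplies the subvariety and the remainder is routine.
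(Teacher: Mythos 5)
Your proposal is correct and follows essentially the same route as the paper: place the slice supports in $\widehat{K}$ via the representing measures coming from Alexander's Cauchy formula, use Theorem \ref{volume} to get a positive-measure subset of $\partial\Delta'(r)$ over which the fibres of $K$ (hence of $\widehat{K}$) are uniformly finite, verify the self-hull hypothesis, and invoke Lemma \ref{A1} to identify $V=\widehat{K}\cap\pi^{-1}(\Delta'(r))$ as the desired $1$-dimensional subvariety. The only difference is cosmetic: you supply a detailed justification (supporting half-planes and $p\,e^{N\ell\circ\pi}$) of the identity $\widehat{K}\cap\pi^{-1}(\lambda)=\widehat{K\cap\pi^{-1}(\lambda)}$ over boundary points, which the paper asserts more tersely, and you settle for an "at most $m$-to-$1$" bound where the paper extracts an "exactly $s$-to-$1$" set; both suffice for Lemma \ref{A1}.
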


\begin{proof}
This lemma is a generalization of \cite[Lemma 12]{A97} for $k = 1$ and the proof is similar to Alexander's. Assuming that $\langle T(r) , \pi , \alpha \rangle$ exists. It follows from the Alexander's Cauchy formula that there is a representing measure supported in $K$ for each $z \in \mbox{spt}(\langle T(r) , \pi , \alpha \rangle)$. Suppose that $\mu$ is a representing measure for $z$. If there is some polynomial $P$ such that $|P(z)| > \mbox{sup}_K |P|$, then
$$
|P(z)| = |\int_K P d\mu| \leq \int_K |P| d\mu < \int_K |P(z)| d\mu = |P(z)|
$$
which is a contradiction. Hence $z \in \widehat{K}$.

Since $\mathcal{H}^1(K) < \infty$, by Theorem \ref{volume} with $f$ replaced by $\pi$, there exist a set $Q$ of positive measure in $\partial\Delta^{\prime}(r)$ and a positive integer $s$ such that $\pi$ maps exactly $s$ points of $K$ to each point of $Q$. We can assume that $Q$ is compact. Since
$$
\widehat{K} = \reallywidehat{ \{[\pi^{-1}(\partial\Delta^{\prime}(r))]\cap K }\} \subset \reallywidehat{ \{ [\pi^{-1}(\partial\Delta^{\prime}(r))]\cap \widehat{K} \}} \subset \widehat{\widehat{K}} = \widehat{K},
$$
we have
$$
\widehat{K} = \reallywidehat{ \{[\pi^{-1}(\partial\Delta^{\prime}(r))]\cap \widehat{K}\} }.
$$
Note that $\pi^{-1}(z) \cap \widehat{K} = \pi^{-1}(z) \cap K$ for every $z \in Q$ because $K$ is compact, $K \subset \pi^{-1}(\partial\Delta^{\prime}(r))$ and $\pi^{-1}(z) \cap K$ is discrete for every $z \in Q$.
By Lemma $\ref{A1}$, $\widehat{K}\cap(\pi^{-1}(\Delta^{\prime}(r)))$ is an analytic cover with $s$-sheets of $\pi^{-1}(\Delta^{\prime}(r))$. Hence, $V \equiv \widehat{K}\cap(\Delta^{\prime}(r)\times \Delta^{\prime \prime}(\delta))$ is also an analytic cover with at most $s$-sheets of $\Delta$. Thus, $V$ is an 1-dimensional subvariety of $\Delta$ such that $z \in V$ for each $z \in \mbox{spt}\langle T(r) , \pi , \alpha \rangle$. This completes the proof of this lemma.

\end{proof}

Finally, we can apply the argument in the first and second paragraph of \cite[pg 135]{A97} to conclude our main theorem for $k = 1$ since Lemma \ref{1} and Lemma \ref{nonzero} that Alexander applied have their counterparts for locally real rectifiable currents.

\section{Proof of the main theorem}
Now suppose that Theorem \ref{main theorem} is true for $k - 1$ where $k \geq 2$. We will apply Lemma \ref{slice} and Theorem \ref{area formula} to reduce the condition of Theorem \ref{main theorem} to the condition that $T$ is positive, and complete the proof by \cite[Theorem 3.9]{H77}. The same argument can be applied in \cite{A97} with \cite[Theorem 3.9]{H77} replaced by \cite[Theorem 5.2.1]{HL75}, but our method is simpler. So this simplifies the proof of Alexander. The main formula that enables us to do induction is the following result
which is from \cite[4.3.2 (1)]{F69}.

\begin{lemma}\label{slice}
Let $W$ be an open set in $\mathbb{R}^m$ and let $R \in F^{loc}_l(W)$. Suppose that $f : W \rightarrow \mathbb{R}^n$ is a smooth map. Then $\langle R , f , a \rangle$ exists for $\mathcal{L}^n$-a.e. $a \in \mathbb{R}^n$, and
$$
\int_{\mathbb{R}^n} \langle R , f , a \rangle (\varphi) d\mathcal{L}^n = (R\wedge f^{\ast}\omega_n)(\varphi)
$$
for all $\varphi \in A^{m-n-l}_c(W)$ where $\omega_n = dx_1\wedge ... \wedge dx_n$.
\end{lemma}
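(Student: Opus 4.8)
The statement to prove is Lemma~\ref{slice}, which is a restatement of the slicing formula \cite[4.3.2~(1)]{F69}. Since this result is quoted from Federer essentially verbatim (for flat chains of locally finite mass / locally flat currents $R \in F^{loc}_l(W)$), I would not reprove it from scratch but rather indicate the steps by which Federer's slicing theory yields it. The plan is as follows. First, recall that for $R \in F^{loc}_l(W)$ and a smooth $f : W \to \mathbb{R}^n$, the slices $\langle R, f, a\rangle$ are defined for $\mathcal{L}^n$-a.e.\ $a$ as the locally flat currents
$$
\langle R, f, a\rangle = \lim_{\rho \to 0^+} \frac{1}{\alpha(n)\rho^n}\, R \wedge f^*\big(\mathbf{1}_{\mathbb{B}(a,\rho)}\,\omega_n\big),
$$
where the limit is taken in the flat norm on compact sets; the existence of this limit a.e.\ and the bound $\mathbf{M}_K(\langle R,f,a\rangle) < \infty$ for a.e.\ $a$ is the content of \cite[4.3.1, 4.3.2]{F69}, and nothing in it uses that $R$ is integral rather than real. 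Second, for a fixed test form $\varphi \in A^{m-n-l}_c(W)$, apply the slices to $\varphi$ and integrate over $a \in \mathbb{R}^n$.

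The key computational identity is that, for the approximating currents, one has by definition of the product of a current with a smooth (or here, bounded Borel) form
$$
\int_{\mathbb{R}^n} \Big(\tfrac{1}{\alpha(n)\rho^n}\, R \wedge f^*(\mathbf{1}_{\mathbb{B}(a,\rho)}\omega_n)\Big)(\varphi)\, d\mathcal{L}^n(a) = \Big(R \wedge f^*\big(\psi_\rho\, \omega_n\big)\Big)(\varphi),
$$
where $\psi_\rho(y) = \tfrac{1}{\alpha(n)\rho^n}\mathcal{L}^n(\mathbb{B}(y,\rho)) \equiv 1$ — so in fact the right-hand side is exactly $(R \wedge f^*\omega_n)(\varphi)$, independent of $\rho$. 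Passing $\rho \to 0^+$ on the left and invoking the dominated convergence theorem (legitimate because of the uniform mass bounds on slices from step one, which dominate $|\langle R,f,a\rangle(\varphi)| \le \mathbf{M}(\langle R,f,a\rangle)\sup|\varphi|$ on the compact support of $\varphi$, and this majorant is $\mathcal{L}^n$-integrable by \cite[4.3.2~(2)]{F69}) yields
$$
\int_{\mathbb{R}^n} \langle R, f, a\rangle(\varphi)\, d\mathcal{L}^n(a) = (R \wedge f^*\omega_n)(\varphi),
$$
which is the claimed formula. Finally, I would remark that $F^{loc}_l(W)$ here denotes currents that are locally flat — i.e.\ locally a sum $A + \partial B$ with $A, B$ of locally finite mass — and that every locally real rectifiable current of finite mass is flat, so the lemma applies in particular to the current $T(r)$ and its restrictions used in Section~3; this is the only point where one must be slightly careful, since Federer states 4.3.2 for $\mathbb{R}^m$ and for flat chains, and the localization to an open $W$ is routine via a partition of unity or by exhausting $W$ with compact sets.

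The main obstacle is not in the proof but in the bookkeeping: one must make sure that the coefficient field (real rather than integer) genuinely plays no role, which it does not, since Federer's construction of slices via the formula above and the mass estimates \cite[4.3.1]{F69} are stated and proved for real flat chains throughout. Thus the lemma follows directly from \cite[4.3.2~(1)]{F69} together with the observation that $RR^{loc}$ currents are locally flat.
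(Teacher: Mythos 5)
Your proposal is correct and matches the paper's treatment: the paper offers no proof of this lemma at all, simply quoting it from Federer \cite[4.3.2 (1)]{F69}, and your observation that Federer's slicing theory is developed for real flat chains (so the coefficient field plays no role) is exactly the justification implicit in that citation. Your sketch of Federer's argument is a reasonable expansion, though the passage to the limit is really handled in Federer via differentiation of measures (one needs absolute continuity of the pushed-forward measure, not just dominated convergence) --- but since the lemma is cited rather than reproved, this does not affect the paper.
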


Now we can complete the induction of the proof of our main result Theorem \ref{main theorem}.

\begin{proof}
Consider $k\geq 2$. Recall that $T \in RR^{loc}_{k,k}(U)$, $T$ is $d$-closed and $\mbox{spt}(T)$ is $\mathcal{H}^{2k}$-locally finite. Choose $r > 0$ such that $\Delta(0 , r) \subset U$, and restrict $T$ to $\Delta(0 , r)$. We only need to show that $T|_{\Delta(0, r)} \in RR^{loc}_{k,k}(\Delta(0, r))$ is a real holomorphic $k$-chain. So we may assume $U = \Delta(0, r)$. Associate $T$ the oriented real $2k$-rectifold $(W , \theta , \vec{T})$ (see \cite[Definition 2.8]{JC}).
Let $T^{\prime} \in RR^{loc}_{k,k}(U)$ be the current given by $T^{\prime}(\varphi) = \int_W \langle \varphi , \xi \rangle d\mathcal{H}^{2k}$ for $2k$-forms $\varphi$, where $\xi(z) = \pm \theta(z)\vec{T}$ is a simple $2k$-vector which represents the naturally oriented complex tangent plane to $W$ at $z$ for $\mathcal{H}^{2k}$-a.e. $z \in W$, the sign $\pm$ being chosen so that $T^{\prime}$ is a positive current. We will show that $T^{\prime}$ is $d$-closed, then the result follows by \cite[Theorem 3.9]{JC}.

First, for each $j, \ 1 \leq j \leq n$, by \cite[pg 437]{F69},
$$
d\langle T , \pi_j , a \rangle = \langle dT , \pi_j , a \rangle = 0 \ \mbox{ and spt}\langle T , \pi_j , a \rangle \subset \mbox{spt}(T) \cap \pi_j^{-1}(a)
$$
for almost all $a \in B_r(0) \subset \mathbb{C}$. By Theorem \ref{volume},
$$
\int_{B_r(0)}^{\ast} \mathcal{H}^{2k-2}(\mbox{spt}(T) \cap \pi_j^{-1}(a)) d\mathcal{H}^2(a) \leq \frac{\Omega(2k-2)\Omega(2)}{\Omega(2k)}\mathcal{H}^{2k}(\mbox{spt}(T)) < \infty.
$$
Hence $\mbox{spt}(T) \cap \pi_j^{-1}(a)$ is $\mathcal{H}^{2k-2}$-locally finite for almost all $a \in B_r(0)$. By the induction hypothesis, $\langle T , \pi_j , a \rangle$ is a real holomorphic $(k - 1)$-chain for almost all $a \in B_r(0)$.
Let $\langle T , \pi_j , a \rangle^{\prime}$ be the positive $(k-1,k-1)$-current associated to $\langle T , \pi_j , a \rangle$. By \cite[4.3.8]{F69}, $\langle T , \pi_j , a \rangle^{\prime} = \langle T^{\prime} , \pi_j , a \rangle$ for almost all $a$. This implies that $\langle T^{\prime} , \pi_j , a \rangle$ is a positive real holomorphic $(k - 1)$-chain for almost all $a$.

Second, to check that $dT^{\prime}(\varphi) = 0$ for all $\varphi \in A^{2k-1}_c(U)$, it suffices to consider those forms of types $(k-1,k)$ and $(k,k-1)$ because $T^{\prime}$ is of bidimension $(k,k)$. We prove the case $(k-1,k)$ since the other case is similar. Let $\varphi = 4fdz_{i_1}\wedge \cdots \wedge dz_{i_{k-1}}\wedge d\Bar{z}_{j_1}\wedge \cdots \wedge d\Bar{z}_{j_k}$. Observe that
\begin{align*}
4dz_i\wedge d\Bar{z}_j = & \ (dz_i + dz_j)\wedge\overline{(dz_i + dz_j)} - (dz_i - dz_j)\wedge\overline{(dz_i - dz_j)}\\
& +i(dz_i + idz_j)\wedge\overline{(dz_i + idz_j)} - i(dz_i - idz_j)\wedge\overline{(dz_i - idz_j)}.
\end{align*}
By the above observation, we can factor $\varphi$ into four components :
\begin{align*}
(-1)^{k-2}\varphi = & \ f(dz_{i_1} + dz_{j_1})\wedge\overline{(dz_{i_1} + dz_{j_1})}\wedge dz_{i_2}\wedge \cdots \wedge dz_{i_{k-1}}\wedge d\Bar{z}_{j_2}\wedge \cdots \wedge d\Bar{z}_{j_k} \\
& - f(dz_{i_1} - dz_{j_1})\wedge\overline{(dz_{i_1} - dz_{j_1})}\wedge dz_{i_2}\wedge \cdots \wedge dz_{i_{k-1}}\wedge d\Bar{z}_{j_2}\wedge \cdots \wedge d\Bar{z}_{j_k}\\
& + if(dz_{i_1} + idz_{j_1})\wedge\overline{(dz_{i_1} + idz_{j_1})}\wedge dz_{i_2}\wedge \cdots \wedge dz_{i_{k-1}}\wedge d\Bar{z}_{j_2}\wedge \cdots \wedge d\Bar{z}_{j_k}\\
& - if(dz_{i_1} - idz_{j_1})\wedge\overline{(dz_{i_1} - idz_{j_1})}\wedge dz_{i_2}\wedge \cdots \wedge dz_{i_{k-1}}\wedge d\Bar{z}_{j_2}\wedge \cdots \wedge d\Bar{z}_{j_k}.
\end{align*}
Therefore, by change of variables, we can further assume that
$$
\varphi = \frac{i}{2} f(dz_{i_1}\wedge d\Bar{z}_{i_1})\wedge dz_{i_2}\wedge  \wedge dz_{i_{k-1}}\wedge d\Bar{z}_{j_2}\wedge \cdots \wedge d\Bar{z}_{j_k}
= \omega_{i_1}\wedge \psi,
$$
where $\psi = fdz_{i_2}\wedge \cdots \wedge dz_{i_{k-1}}\wedge d\Bar{z}_{j_2}\wedge \cdots \wedge d\Bar{z}_{j_k}$.
Then
$$
dT^{\prime}(\varphi) = (-1)^{2k+1}T^{\prime}(d\varphi) = -T^{\prime}(\omega_{i_1}\wedge d\psi) =  -(T^{\prime}\wedge \pi_{i_1}^{\ast}\omega_{i_1})(d\psi)
$$
By Lemma \ref{slice}, we have
$$
dT^{\prime}(\varphi) = \int_{B_r(0)} -\langle T^{\prime} , \pi_{i_1} , a \rangle (d\psi) d\mathcal{L}^2(a) = \int_{B_r(0)} d\langle T^{\prime} , \pi_{i_1} , a \rangle (\psi) d\mathcal{L}^2(a) = 0
$$
since $\langle T^{\prime} , \pi_{i_1} , a \rangle $ is a positive real holomorphic $(k - 1)$-chain and hence $d$-closed for almost all $a$.
Therefore by our result in \cite{JC}, $T'$ and hence $T$ are real holomorphic $k$-chains.
\end{proof}

\section{Applications}
In this section, we are going to generalize some results in $\cite[Section\ 4]{JC}$ and $\cite{HS74}$. We first give a generalization of \cite[Proposition 4.1]{JC}
to get rid of the positivity condition on $e$.

\begin{proposition}\label{R+dd^c}
Let $X$ be a complex projective manifold of complex dimension $n$ and $e\in A^{n-k, n-k}(X)$ be a $d$-closed form.
If $e$ considered as a current can be written as
$$e=R+dd^cb$$
where $R$ is a current such that the $(k, k)$-part $R_{k,k}$ of $R$ has finite mass and $\mathcal{H}^{2k}$-locally finite support, then $e$ is homologous to some algebraic cycle with real coefficients.
\end{proposition}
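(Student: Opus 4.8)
The plan is to pass from $e$ to the bidimension $(k,k)$-part of $R$, apply Theorem~\ref{main theorem}, and then invoke Chow's theorem. Taking the bidimension $(k,k)$-component of the identity $e=R+dd^cb$ and using that $e$ has pure bidimension $(k,k)$ while $dd^c$ lowers bidimension by $(1,1)$, only the $(k+1,k+1)$-component $c:=b_{k+1,k+1}$ of $b$ survives, so that
$$
e = R_{k,k} + dd^c c .
$$
Set $T:=R_{k,k}$. Then $T$ is $d$-closed, since $de=0$ and $d\,(dd^c c)=0$; moreover $e-T=d(d^c c)$ is $d$-exact as a current, so $e$ and $T$ represent the same class in $H_{2k}(X;\R)$. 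Hence it suffices to prove that $T$ is itself a real algebraic $k$-cycle. (If $e$ is not real one runs the argument separately on $\mathrm{Re}\,e$ and $\mathrm{Im}\,e$; their $(k,k)$-parts are $\mathrm{Re}(R_{k,k})$ and $\mathrm{Im}(R_{k,k})$, which again have finite mass and $\mathcal{H}^{2k}$-locally finite support.)

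Next I would verify that $T$ satisfies the hypotheses of Theorem~\ref{main theorem}. By assumption $\M(T)<\infty$ and $\mathrm{spt}(T)$ is $\mathcal{H}^{2k}$-locally finite; since $X$ is compact this yields $\mathcal{H}^{2k}(\mathrm{spt}(T))<\infty$. Together with $dT=0$ the current $T$ is normal, and a normal $2k$-current whose support has finite Hausdorff $2k$-measure is rectifiable; this is the measure-theoretic input underlying Federer's structure theorem (and the structure results of Harvey--King extended in Theorem~\ref{structure theorem}), cf. \cite[\S4.2]{F69}. Being moreover of type $(k,k)$ by construction, $T\in RR^{loc}_{k,k}(X)$. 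Theorem~\ref{main theorem} now applies and gives that $T$ is a real holomorphic $k$-chain, $T=\sum_j r_j[V_j]$ with $V=\bigcup_j V_j$ a purely $k$-dimensional holomorphic subvariety of $X$ with irreducible components $V_j$.

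To finish, I would use that $X$ is projective: by Chow's theorem each $V_j$ is an algebraic subvariety, and since $\mathcal{H}^{2k}(V)<\infty$ there are only finitely many components, so $T=\sum_j r_j[V_j]$ is an honest algebraic $k$-cycle with real coefficients. Combined with the first paragraph, $e$ is homologous to the real algebraic cycle $T$, which is the assertion.

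The step I expect to be the crux is the implication ``$\M(T)<\infty$ and $\mathrm{spt}(T)$ $\mathcal{H}^{2k}$-locally finite'' $\Rightarrow$ ``$T\in RR^{loc}_{k,k}(X)$'': once $T$ is known to be locally real rectifiable, the rest is Theorem~\ref{main theorem}, Chow's theorem, and bookkeeping with bidimensions. The complex structure plays no role in this rectifiability statement; the only delicate point is to cite (or, if the exact form is not available, to reprove in the form needed here) the fact that a $d$-closed finite-mass current supported on a set of locally finite Hausdorff measure in its own dimension is rectifiable. A minor additional point to check is that the reduction to real $e$ and the passage $e-T=d(d^c c)\Rightarrow[e]=[T]$ are legitimate at the level of de Rham/homology classes, which is routine since $e$ is smooth and $X$ is a compact complex manifold.
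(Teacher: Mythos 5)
Your argument is correct and follows essentially the same route as the paper, whose proof simply points to [JC, Proposition 4.1] and observes that Theorem \ref{main theorem} now lets one conclude that $R_{k,k}$ is a real holomorphic chain without the positivity hypothesis. The one step you leave to a somewhat vague citation --- that normality of $T=R_{k,k}$ together with $\mathcal{H}^{2k}$-finiteness of $\mathrm{spt}(T)$ forces $T\in RR^{loc}_{k,k}(X)$ --- is exactly the content hidden in that reference and does go through: the density comparison $\|T\|(A)\leq 2^{2k}\lambda\,\mathcal{H}^{2k}(A)$ applied to $A=\{x:\Theta^{\ast 2k}(\|T\|,x)\leq\lambda\}$ gives $\Theta^{\ast 2k}(\|T\|,x)>0$ for $\|T\|$-a.e.\ $x$, after which the rectifiability theorem for real multiplicities (cited as [S, Theorem 32.1] in the paper's later applications) yields the claim.
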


\begin{proof}
This is a generalization of \cite[Proposition 4.1]{JC}. In the original proof, we need to additionally assume that $R_{k,k}$ is positive to assert that $R_{k,k}$ is a real holomorphic chain. But by Theorem \ref{main theorem}, we can directly conclude that $R_{k,k}$ is a real holomorphic chain without the positivity on $R_{k,k}$.
\end{proof}

\begin{corollary}
Let $X$ be a complex projective manifold of dimension $n$. Given a smooth $d$-closed form $e\in A^{n-k, n-k}(X)$.
If $e$ is homologous to a Lipschitz $2k$-chain $P$  with rational coefficients which is $d^c$-closed,
then $e$ is homologous to an algebraic cycle with rational coefficients.
\end{corollary}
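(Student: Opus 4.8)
The plan is to deduce this Corollary from Proposition \ref{R+dd^c} together with the rational structure coming from the hypotheses. First I would write out what it means for $e$ to be homologous to a Lipschitz $2k$-chain $P$ with rational coefficients: there is a current $b$ with $e - P = db$, so as currents $e = P + db$. The extra hypothesis is that $P$ is $d^c$-closed, i.e. $d^cP = 0$. The goal is to massage $e = P + db$ into the shape $e = R + dd^cb'$ required by Proposition \ref{R+dd^c}, where $R$ has a $(k,k)$-part of finite mass and $\mathcal{H}^{2k}$-locally finite support; a Lipschitz chain of finite volume automatically has finite mass and its support, being a finite union of Lipschitz images of simplices, is $\mathcal{H}^{2k}$-locally finite, so $P$ itself is the natural candidate for $R$ (or $R_{k,k}$).

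The key step is the following standard Hodge-theoretic/$dd^c$ manipulation on a compact K\"ahler (here projective) manifold. Since $e$ and $P$ are both $d$-closed (the latter because $dP = 0$ as it is homologous to the closed form $e$, or directly) and $d^c$-closed, and $e - P = db$ is $d$-exact, the $dd^c$-lemma (in its version for currents, see e.g. the references already cited for \cite[Proposition 4.1]{JC}) gives a current $b'$ with $e - P = dd^c b'$. Hence $e = P + dd^c b'$, which is exactly the hypothesis of Proposition \ref{R+dd^c} with $R = P$. Applying that proposition, $e$ is homologous to an algebraic cycle $Z$ with real coefficients. It remains to upgrade the coefficients from real to rational; this is where the rationality of $P$ is used. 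Indeed, the homology class $[e] = [P]$ lies in the image of $H_{2k}(X;\Q) \to H_{2k}(X;\R)$ (being represented by the rational chain $P$), so it is a rational class; and an algebraic cycle class that is rational as a homology class can be represented by an algebraic cycle with rational coefficients (write $Z = \sum a_i Z_i$ with $Z_i$ irreducible subvarieties and $a_i \in \R$; the classes $[Z_i]$ span a $\Q$-subspace of $H_{2k}(X;\Q)$, and $[Z] = \sum a_i[Z_i]$ being rational forces, after choosing a $\Q$-basis among the $[Z_i]$, a rational representative).

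The main obstacle I expect is the bookkeeping in that last rationality upgrade: Proposition \ref{R+dd^c} only delivers real coefficients, and one must argue carefully that the constraint "$[Z]$ is a rational homology class" can be pushed down to "$Z$ can be chosen with rational coefficients." The clean way is a linear-algebra argument over $\Q$: the $\Q$-span $V$ of $\{[Z_i]\}$ inside $H_{2k}(X;\Q)$ contains $[Z] = [e]$ because $[e]$ is rational and equals the real combination $\sum a_i[Z_i] \in V\otimes\R$, so $[e] \in V\otimes\R \cap H_{2k}(X;\Q) = V$; then any expression of $[e]$ as a $\Q$-combination of the $[Z_i]$ gives the desired rational algebraic cycle homologous to $e$. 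One should also double-check that $P$ being $d^c$-closed is genuinely needed only to invoke the $dd^c$-lemma and produce $b'$ — without it one only gets $e - P = db$, not $dd^cb'$ — and that the finiteness hypotheses on $R = P$ in Proposition \ref{R+dd^c} are met by a rectifiable Lipschitz chain. None of these steps is deep; the content is entirely in Proposition \ref{R+dd^c} and hence in Theorem \ref{main theorem}.
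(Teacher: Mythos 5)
Your proposal follows essentially the same route as the paper: write $e = P + da$, observe that $d^c(da) = d^c e - d^c P = 0$, invoke the $dd^c$-lemma to get $e = P + dd^c b$, and apply Proposition \ref{R+dd^c} with $R = P$ (whose $(k,k)$-part clearly has finite mass and $\mathcal{H}^{2k}$-locally finite support). The only difference is that you spell out the linear-algebra step upgrading real to rational coefficients, which the paper compresses into the phrase ``by \ldots the rationality of $e$''; your elaboration is correct and consistent with what the paper intends.
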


\begin{proof}
By assumption, we have
$$e=P+da$$
for some $a\in \mathscr{D}'_{2(k+1)}(X)$. Then $d^ce = 0 = d^cP + d^cda = d^cda$. So
$$
e = P + dd^cb
$$
for some $b$ by the $dd^c$-lemma. Clearly, $\mbox{spt}(P_{k,k}) \subset \mbox{spt}(P)$ and $\M(P_{k,k}) \leq \M(P) < \infty$ where $P_{k,k}$ is the $(k,k)$-part of $P$. Hence by Proposition \ref{R+dd^c} and the rationality of $e$, $e$ is homologous to an algebraic cycle with rational coefficients.
\end{proof}

Recall that $N^{loc}_k(M)$ denotes the group of locally normal $k$-currents on $M$.

\begin{corollary}\label{characterization of RR 2}
Let $M$ be a smooth manifold and $T \in N^{loc}_k(M)$. If there is a constant $c > 0$ such that $\Theta^k(\|T\| , a) \geq c$ for $\mathcal{H}^k$-a.e. $a \in \mbox{spt}(T)$, then $\mbox{spt}(T)$ is $\mathcal{H}^k$-locally finite.
\end{corollary}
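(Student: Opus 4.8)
The plan is to deduce this from the classical density comparison theorem of geometric measure theory: if a Radon measure has uniformly positive upper Hausdorff $k$-density on a set, then that set has finite Hausdorff $k$-measure, controlled by the mass of the measure and the density bound. Since being $\mathcal{H}^k$-locally finite is a local property, it suffices to produce, for each $a_0\in\mbox{spt}(T)$, an open neighborhood $V'$ with $\mathcal{H}^k(V'\cap\mbox{spt}(T))<\infty$ (points outside $\mbox{spt}(T)$ being trivial).

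First I would fix a coordinate chart $\phi$ around $a_0$ together with a relatively compact open neighborhood $V\ni a_0$ inside its domain, small enough that $\phi$ and $\phi^{-1}$ are Lipschitz on the relevant compact sets. Because $T\in N^{loc}_k(M)$, the mass $\|T\|$ is a Radon measure on $M$, so after shrinking $V$ we have $\|T\|(V)<\infty$. Pushing forward the restriction of $\|T\|$ to $V$ by $\phi$ yields a finite Radon measure $\nu$ on an open subset of $\mathbb{R}^m$. A bi-Lipschitz map distorts balls, hence Hausdorff $k$-measure and the density $\Theta^k$, only by factors bounded above and below; consequently the hypothesis $\Theta^k(\|T\|,a)\ge c$ for $\mathcal{H}^k$-a.e.\ $a\in\mbox{spt}(T)$ transfers to the statement that the upper density satisfies $\Theta^{*k}(\nu,x)\ge c'$ for $\mathcal{H}^k$-a.e.\ $x\in\mbox{spt}(\nu)$, where $c'>0$ depends only on the Lipschitz constants. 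The exceptional set remains $\mathcal{H}^k$-null, since a Lipschitz image of an $\mathcal{H}^k$-null set is $\mathcal{H}^k$-null.

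Next I would invoke the density comparison theorem \cite[2.10.19]{F69}: for any set $B$ on which $\Theta^{*k}(\nu,\cdot)\ge c'$ one has $\mathcal{H}^k(B)\le C_k\,(c')^{-1}\,\nu(\mathbb{R}^m)$ with $C_k$ a dimensional constant (this follows from a Vitali $5r$-covering argument: cover $B$ by small balls on which $\nu$ nearly realizes the density, extract a countable disjoint subfamily, and compare the sums of radii). I apply this with $B=(\mbox{spt}(\nu)\cap\phi(\overline{V'}))\setminus Z$, where $V'\Subset V$ is a slightly smaller neighborhood of $a_0$ and $Z$ is the $\mathcal{H}^k$-null exceptional set above; discarding $Z$ changes neither the Hausdorff measure of $\mbox{spt}(\nu)\cap\phi(\overline{V'})$ nor the bound, so $\mathcal{H}^k\bigl(\mbox{spt}(\nu)\cap\phi(\overline{V'})\bigr)\le C_k(c')^{-1}\|T\|(V)<\infty$. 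Transporting back through $\phi$ gives $\mathcal{H}^k(V'\cap\mbox{spt}(T))<\infty$, and since $a_0\in\mbox{spt}(T)$ was arbitrary this completes the proof.

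There is no deep obstacle: the argument is essentially an invocation of the comparison theorem. The two points that deserve attention are that local normality of $T$ is exactly what makes $\|T\|$ a locally finite Radon measure, so that $\nu$ is a legitimate finite measure to which the theorem applies, and that the hypothesis holds only $\mathcal{H}^k$-almost everywhere rather than everywhere — harmless, because an $\mathcal{H}^k$-null set contributes nothing to the Hausdorff measure being estimated. One could equally well bypass the charts and run the covering argument directly in geodesic balls after fixing an auxiliary Riemannian metric; the reduction to $\mathbb{R}^m$ is merely the shortest route to a citable statement.
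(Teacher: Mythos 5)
Your proof is correct and follows essentially the same route as the paper's: both rest on Federer's density comparison theorem \cite[2.10.19]{F69} applied to the part of $\mbox{spt}(T)$ where the density is at least $c$, use local normality to get local finiteness of $\|T\|$, and discard the $\mathcal{H}^k$-null exceptional set. The only cosmetic difference is your explicit chart reduction to $\mathbb{R}^m$, which (as you note) the paper sidesteps by applying the comparison theorem directly on compact subsets of $M$.
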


\begin{proof}
Let $E = \{ a \in \mbox{spt}(T) : \Theta^k(\|T\| , a)<c \}$. Then $\mathcal{H}^k(E) = 0$. Since $T$ is locally flat, by \cite[4.2.14]{F69}, $\|T\|(E) = 0$. Let $K\subset M$ be any compact subset. Since $T$ is locally normal, $||T||(K)<\infty$.
By \cite[2.10.19(3)]{F69},
$$||T||(K)\geq c\mathscr{S}^k(\mbox{spt}(T)\cap K-E)\geq c\mathcal{H}^k(\mbox{spt}(T)\cap K-E)=c\mathcal{H}^k(\mbox{spt}(T)\cap K)$$
which implies that $\mathcal{H}^k(\mbox{spt}(T)\cap K)<\infty$ and hence $T$ is $\mathcal{H}^k$-locally finite.
\end{proof}

\begin{theorem}
Let $X$ be a complex manifold and $T \in N^{loc}_{k,k}(X)$ be $d$-closed.
\begin{enumerate}
\item
If there is a constant $c > 0$ such that $\Theta^{2k}(\|T\|, a) \geq c$ for $\mathcal{H}^{2k}$-a.e. $a \in \mbox{spt}(T)$, then $T$ is a real holomorphic $k$-chain.
\item
If $N = \{ a \in X : \Theta^{2k}(\|T\| , a) > 0 \}$ is $\mathcal{H}^{2k}$-locally finite and $\Theta^{2k}(\|T\|, a)>0$ for
$\mathcal{H}^{2k}$-a.e. $a\in \mbox{spt}(T)$, then $T$ is a real holomorphic $k$-chain.
\end{enumerate}
\end{theorem}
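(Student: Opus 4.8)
The plan is to derive both statements from Theorem \ref{main theorem}: in each case I will verify that $T\in RR^{loc}_{k,k}(X)$ and that $\mbox{spt}(T)$ is $\mathcal{H}^{2k}$-locally finite, and then the main theorem (together with $dT=0$) immediately gives that $T$ is a real holomorphic $k$-chain.

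For (1), observe that $T\in N^{loc}_{k,k}(X)$ is in particular a locally normal $2k$-current, so Corollary \ref{characterization of RR 2}, applied with $M=X$ and with $2k$ in place of $k$, already shows that $\mbox{spt}(T)$ is $\mathcal{H}^{2k}$-locally finite. It then remains only to see that $T$ is locally real rectifiable. For this I would invoke the rectifiability theorem for normal currents: a locally normal $m$-current whose support is $\mathcal{H}^m$-locally finite is locally rectifiable (cf.\ \cite{F69}; see also \cite{JC}). Since $T$ is already of type $(k,k)$, this yields $T\in RR^{loc}_{k,k}(X)$, and Theorem \ref{main theorem} finishes (1).

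For (2), I would first bring the hypotheses into the shape used in (1). Put $E=\mbox{spt}(T)\setminus N=\{a\in\mbox{spt}(T):\Theta^{2k}(\|T\|,a)=0\}$, so $\mathcal{H}^{2k}(E)=0$ by assumption. Given any $a_0\in X$, choose a neighborhood $V$ with $\mathcal{H}^{2k}(N\cap V)<\infty$, which is possible since $N$ is $\mathcal{H}^{2k}$-locally finite; then $\mathcal{H}^{2k}(\mbox{spt}(T)\cap V)\le\mathcal{H}^{2k}(N\cap V)+\mathcal{H}^{2k}(E)<\infty$, so $\mbox{spt}(T)$ is $\mathcal{H}^{2k}$-locally finite. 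Moreover, being locally normal $T$ is locally flat, so \cite[4.2.14]{F69} gives $\|T\|(E)=0$, and hence $\Theta^{2k}(\|T\|,a)>0$ holds $\|T\|$-almost everywhere (the form in which the rectifiability theorem is usually stated). From here the reasoning is identical to (1): $T\in RR^{loc}_{k,k}(X)$, and Theorem \ref{main theorem} applies.

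The measure-theoretic bookkeeping above is routine; the one point deserving care is the rectifiability theorem itself, that a $d$-closed locally normal current of type $(k,k)$ with $\mathcal{H}^{2k}$-locally finite support is locally real rectifiable. Should one prefer to prove it rather than cite it, the argument runs as follows: local flatness gives $\|T\|\ll\mathcal{H}^{2k}$ by \cite[4.2.14]{F69}; decompose $\mbox{spt}(T)$ into a countably $2k$-rectifiable part $R$ and a purely $2k$-unrectifiable part $P$; then, using the Besicovitch--Federer structure theorem together with a slicing argument (applying Lemma \ref{slice} with $f$ ranging over orthogonal projections onto $2k$-planes, whose restrictions to $P$ have $\mathcal{L}^{2k}$-null image), one shows $\|T\|(P)=0$, so $\|T\|$ is concentrated on the rectifiable set $R$; a locally normal current with this property is locally rectifiable. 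This slicing step is the one I expect to be the main obstacle.
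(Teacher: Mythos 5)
Your proposal is correct and follows the paper's overall strategy --- verify that $T\in RR^{loc}_{k,k}(X)$ and that $\mbox{spt}(T)$ is $\mathcal{H}^{2k}$-locally finite, then apply Theorem \ref{main theorem} --- but it differs in how the rectifiability of $T$ is obtained. The paper gets rectifiability in both parts from the density-based rectifiability theorem \cite[Theorem 32.1]{S}: since $T$ is locally flat, \cite[4.2.14]{F69} gives $\|T\|\ll\mathcal{H}^{2k}$, so the hypotheses ($\Theta^{2k}(\|T\|,a)\geq c$, resp.\ $>0$, for $\mathcal{H}^{2k}$-a.e.\ $a\in\mbox{spt}(T)$) immediately yield $\Theta^{2k}(\|T\|,a)>0$ for $\|T\|$-a.e.\ $a$, which is exactly what that theorem requires. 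You instead invoke, at least in part (1) and in your closing paragraph, the stronger statement that a locally normal $2k$-current with $\mathcal{H}^{2k}$-locally finite support is locally rectifiable, and you sketch its proof via the Besicovitch--Federer structure theorem and slicing. That statement is true (it is essentially the rectifiability theorem for flat chains of finite size), but it is considerably deeper than what is needed, and the slicing step you flag as the main obstacle can be avoided entirely: in both parts the density hypothesis is handed to you, and --- as you yourself observe in part (2) --- it transfers to a $\|T\|$-a.e.\ statement, so the standard density-based theorem applies directly. I would also note that your write-up is in one respect more careful than the paper's: the paper's proof of (1) attributes both conclusions to Corollary \ref{characterization of RR 2}, which in fact only yields the $\mathcal{H}^{2k}$-local finiteness of $\mbox{spt}(T)$; the rectifiability is a separate step, which you correctly identify as requiring its own justification. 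The measure-theoretic bookkeeping in your part (2) matches the paper's.
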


\begin{proof}
1. By Corollary \ref{characterization of RR 2}, $T \in RR^{loc}_{k,k}(X)$ and $\mbox{spt}(T)$ is $\mathcal{H}^{2k}$-locally finite.
      Then by Theorem \ref{main theorem}, $T$ is a real holomorphic $k$-chain.

2. Note that $N \subset \mbox{spt}(T)$ and $\mathcal{H}^{2k}(\mbox{spt}(T)-N) = 0$. For any compact set $K\subset X$,
$$\mathcal{H}^{2k}(\mbox{spt}(T)\cap K) = \mathcal{H}^{2k}((\mbox{spt}(T)-N)\cap K) + \mathcal{H}^{2k}(\mbox{spt}(T)\cap N \cap K)\leq \mathcal{H}^{2k}(N\cap K) < \infty$$
which implies that $T$ has $\mathcal{H}^{2k}$-locally finite support. By \cite[Theorem 32.1]{S}, $T\in RR^{loc}_{k, k}(X)$ and by Theorem \ref{main theorem}, $T$ is a real holomorphic $k$-chain.

\end{proof}

Harvey and King proved a structure theorem for positive currents in $\cite{HK}$ :

\begin{theorem}\label{structure theorem}
Let $U\subset \C^n$ be an open set. Suppose that $u \in \mathscr{D}^{\prime}_{k,k}(U)$ is positive and d-closed. Assume that for each compact set $K \subset U$ there exists a constant $c > 0$ such that $n(u , a) \geq c$ for all $a \in \mbox{spt}(u)\cap K$. Then there exists a pure $2k$-dimensional subvarieties $V$ of $U$ and positive real numbers $a_j$ for each irreducible component $V_j$ of $V$ such that $u = \sum^{\infty}_{j=1} a_j[V_j]$.
\end{theorem}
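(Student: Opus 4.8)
The plan is to obtain Theorem~\ref{structure theorem} as a corollary of Theorem~\ref{main theorem}, by converting the hypothesis on $n(u,a)$ into a density bound and then running the argument used for part~(1) of the theorem proved just above. First observe that $u$ is automatically locally normal: a positive current of bidimension $(k,k)$ has measure coefficients, hence locally finite mass, and $du=0$ kills the boundary, so $u\in N^{loc}_{k,k}(U)$. Moreover, for a positive $d$-closed $(k,k)$-current the Harvey--King density $n(u,a)$ agrees, up to a fixed dimensional normalizing constant that is irrelevant to a lower bound, with the Euclidean $2k$-density $\Theta^{2k}(\|u\|,a)$ (Lelong's monotonicity formula). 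Thus the hypothesis says: for each relatively compact open subset $U'$ of $U$ there is $c>0$ with $\Theta^{2k}(\|u\|,a)\ge c$ for every $a\in\mbox{spt}(u)\cap U'$, in particular for $\mathcal H^{2k}$-a.e.\ such $a$.

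Next I would repeat the computation in the proof of Corollary~\ref{characterization of RR 2} on each such $U'$: since $u$ is locally normal, hence locally flat, $\|u\|$ vanishes on the $\mathcal H^{2k}$-null set where the density is $<c$, and then \cite[2.10.19(3)]{F69} yields $\mathcal H^{2k}(\mbox{spt}(u)\cap U')\le c^{-1}\|u\|(\overline{U'})<\infty$. Hence $\mbox{spt}(u)$ is $\mathcal H^{2k}$-locally finite, and since a locally normal current whose support has locally finite $\mathcal H^{2k}$-measure is locally real rectifiable (\cite[Theorem~32.1]{S}), we get $u\in RR^{loc}_{k,k}(U)$. Theorem~\ref{main theorem} now applies to $u$ on $U$ and produces a representation $u=\sum_{j=1}^{\infty}r_j[V_j]$ with $r_j\in\R$ and $V=\bigcup_j V_j$ a purely $k$-dimensional (real $2k$-dimensional) holomorphic subvariety of $U$ whose irreducible components are the $V_j$.

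Finally I would pin down the sign of the coefficients. Drop every $V_j$ with $r_j=0$. For each remaining index $j$, pick $x\in V_j$ in the regular locus of $V$ and lying on no $V_i$ with $i\ne j$; on a sufficiently small neighborhood $\Omega$ of $x$ one has $u|_\Omega=r_j\,[V_j]|_\Omega$, and since $u$ and $[V_j]$ are positive with $[V_j]|_\Omega\ne 0$, necessarily $r_j>0$. Setting $a_j:=r_j$ yields the asserted representation $u=\sum_j a_j[V_j]$ with $a_j$ positive.

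I expect the only genuine difficulty to lie at the seam with the cited literature rather than in any new argument: one must make sure that the density $n(u,a)$ used in \cite{HK} is the quantity that feeds Corollary~\ref{characterization of RR 2} (that is, a positive multiple of $\Theta^{2k}(\|u\|,a)$), and that ``locally normal with $\mathcal H^{2k}$-locally finite support'' really does force local real rectifiability. Both are classical facts, and once they are granted the theorem drops out of Theorem~\ref{main theorem} essentially for free.
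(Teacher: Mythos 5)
Your argument is correct. Note, however, that the paper does not actually prove Theorem~\ref{structure theorem}: it is quoted as a known result of Harvey and King \cite{HK}, and what the paper proves is the more general statement immediately following it (for $T\in N^{loc}_{k,k}(U)$, $d$-closed, with $\Theta^{\ast 2k}(\|T\|,a)\geq c$ on $\mbox{spt}(T)\cap K$ for each compact $K$). The proof given there is, step for step, the one you propose: the density lower bound together with local finiteness of $\|T\|$ gives $\mathcal{H}^{2k}$-local finiteness of the support via \cite[2.10.19(3)]{F69}, local real rectifiability follows from \cite[Theorem 32.1]{S}, and then Theorem~\ref{main theorem} applies. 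The only ingredients you supply beyond that proof --- both needed to recover the Harvey--King statement as a special case and both handled correctly --- are the preliminary observations that a positive $d$-closed $(k,k)$-current is automatically locally normal and that $n(u,a)$ is comparable to $\Theta^{2k}(\|u\|,a)$, and the final localization at a generic regular point of each component to see that positivity of $u$ forces $a_j>0$. So your write-up is, in effect, a correct proof that the quoted theorem is a corollary of the paper's generalization, by the paper's own method.
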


We give an analogous but more general result.

\begin{theorem}
Suppose $T \in N^{loc}_{k,k}(U)$ and $dT = 0$. Assume that for each compact set $K \subset U$ there exists a constant $c > 0$ such that $\Theta^{\ast 2k}(\|T\| , a) \geq c$ for all $a \in \mbox{spt}(T)\cap K$. Then $T$ is a real holomorphic $k$-chain.
\end{theorem}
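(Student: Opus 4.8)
The plan is to verify the hypotheses of Theorem \ref{main theorem} for $T$: $d$-closedness is assumed, so it suffices to show that $\mbox{spt}(T)$ is $\mathcal{H}^{2k}$-locally finite and that $T\in RR^{loc}_{k,k}(U)$, after which Theorem \ref{main theorem} immediately gives the conclusion. This parallels the proof of the preceding theorem, the only new point being that the hypothesis now controls the \emph{upper} density $\Theta^{\ast 2k}$ at \emph{every} point of $\mbox{spt}(T)\cap K$ rather than the density $\Theta^{2k}$ at $\mathcal{H}^{2k}$-a.e.\ point.

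First I would establish the $\mathcal{H}^{2k}$-local finiteness of $\mbox{spt}(T)$. Fix a compact $K\subset U$ and choose a compact $K'$ with $K\subset \mbox{int}(K')\subset K'\subset U$; let $c>0$ be the constant associated to $K'$, so that $\Theta^{\ast 2k}(\|T\|,a)\geq c$ for every $a\in \mbox{spt}(T)\cap K'$. Since the upper $2k$-density of $\|T\|$ is at least $c$ at every point of $\mbox{spt}(T)\cap K$, a Vitali ($5r$-)covering argument, as in \cite[2.10.19]{F69}, yields a constant $C$ depending only on $k$ with
$$
\mathcal{H}^{2k}(\mbox{spt}(T)\cap K)\leq \frac{C}{c}\,\|T\|(K').
$$
Because $T$ is locally normal, $\|T\|(K')<\infty$, hence $\mathcal{H}^{2k}(\mbox{spt}(T)\cap K)<\infty$; as $K$ was arbitrary, $\mbox{spt}(T)$ is $\mathcal{H}^{2k}$-locally finite. (Letting $c$ depend on $K'$ is precisely the local character of the hypothesis, exactly as in Theorem \ref{structure theorem}.)

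Next, $T$ is a locally normal $2k$-current whose support now has locally finite $\mathcal{H}^{2k}$-measure, so by the rectifiability theorem for such currents, \cite[Theorem 32.1]{S} — applied just as in the proof of the preceding theorem — we obtain $T\in RR^{loc}_{k,k}(U)$. Then Theorem \ref{main theorem} applies: $T$ is $d$-closed, lies in $RR^{loc}_{k,k}(U)$, and has $\mathcal{H}^{2k}$-locally finite support, whence $T$ is a real holomorphic $k$-chain.

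The only genuine step, beyond invoking Theorem \ref{main theorem} and \cite[Theorem 32.1]{S}, is the covering estimate in the second paragraph, and it is routine. The one subtlety, compared with the lower-density argument in the proof of Corollary \ref{characterization of RR 2} — where \cite[2.10.19(3)]{F69} gives directly that $\|T\|\geq c\,\mathscr{S}^{2k}$ on the relevant set — is that passing from a lower bound on the \emph{upper} density to an upper bound on the Hausdorff measure of the support costs an extra dimensional constant, which is harmless here since only finiteness is needed.
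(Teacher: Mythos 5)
Your proposal is correct and follows essentially the same route as the paper: both arguments simply verify the two remaining hypotheses of Theorem \ref{main theorem}, obtaining $T\in RR^{loc}_{k,k}(U)$ from \cite[Theorem 32.1]{S} (the positive upper density holds $\|T\|$-a.e.\ because $\|T\|$ is carried by $\mbox{spt}(T)$) and the $\mathcal{H}^{2k}$-local finiteness of $\mbox{spt}(T)$ from a density-versus-measure comparison against the locally finite mass of the locally normal current $T$. The only cosmetic differences are the order of the two steps and that the paper phrases the measure estimate via the density after rectifiability is established, whereas you bound $\mathcal{H}^{2k}(\mbox{spt}(T)\cap K)$ directly from the upper-density hypothesis by a covering argument; in fact \cite[2.10.19(3)]{F69} already gives $\|T\|(V)\geq c\,\mathscr{S}^{2k}(\mbox{spt}(T)\cap K)\geq c\,\mathcal{H}^{2k}(\mbox{spt}(T)\cap K)$ for upper densities with constant $1$, so the extra dimensional constant you allow for is not needed (and is in any case harmless).
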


\begin{proof}
Clearly, $\Theta^{\ast 2k}(\|T\| , a) = 0$ for all $a \in U- \mbox{spt}(T)$. Hence $\Theta^{\ast 2k}(\|T\| , a) > 0$ for $\|T\|$-a.e. $a \in U$. By \cite[Theorem 32.1]{S}, $T$ is real rectifiable. Since for any fixed $a \in \mbox{spt}(T)$ and given any $r > 0$, there is a $c > 0$ such that $\Theta^{2k}(\|T\| , b) \geq c$ for $\mathcal{H}^{2k}$-a.e. $b \in \mbox{spt}(T)\cap B_r(a)$.  Therefore
$$\infty > \|T\|(B_r(a)\cap \mbox{spt}(T)) \geq c\mathcal{H}^{2k}(\mbox{spt}(T)\cap B_r(a))$$
which implies that $\mbox{spt}(T)$ is $\mathcal{H}^{2k}$-locally finite. The result follows by Theorem \ref{main theorem}.
\end{proof}

Let $M$ be an oriented Riemannian manifold and $V$ be a compactly supported vector field on $M$. Consider the flow $\{h_t\}$ for $V$ where $\{h_t\}$ is a 1-parameter group of diffeomorphisms of $M$ with $h_t(x) = x$ for $x \notin \mbox{supp}(V)$. Let $W$ be a relatively compact open subset of $M$ such that $\mbox{supp}(V) \subset W$. For $T \in RR^{loc}_{k}(X)$, let
$$ J_{T,V}(t) = \|h_{t\ast}T\|(W)$$
and
$$\delta^{(j)}(T , V) = J_{T,V}^{(j)}(0)$$
the $j^{th}$ derivative of $J_{T,V}(t)$.

Note that $\delta^{(j)}(T , V)$ does not depend on the choice of $W$ since $h_t(x) = x$ for $x \notin \mbox{supp}(V)$. $\delta^{(j)}(T , V)$ is called the $j^{th}$ variation of $T$ with respect to $V$.

\begin{definition}
\begin{enumerate}
\item
We say that a current $T \in RR^{loc}_k(M)$ is stationary if $\delta^{(1)}(T , V) = 0$ for all compactly supported vector fields $V$ on $M$.
\item
We say that a current $T \in RR^{loc}_k(M)$ is stable if $J_{T,V}(t)$ has a local minimum at $t = 0$ for all compactly supported vector fields $V$ on $M$.
\end{enumerate}
\end{definition}

\begin{remark}
If $T$ is stable, then $T$ is stationary and $\delta^{(2)}(T , V) \geq 0$.
\end{remark}

By \cite[16.1, 16.2, 16.3, 17.8]{S}, we have the following theorem :

\begin{theorem}\label{stationary}
If  $T \in RR^{loc}_k(M)$ is stationary, then the density  $\Theta^k(\|T\| , x) = \lim_{r\rightarrow 0} \frac{\|T\|(B_r(x))}{\Omega(k)r^k}$ exists at every point $x \in U$, and $\Theta^k(\|T\| , \bullet)$ is an upper-semi-continuous function in $U$ :
$$
\Theta^k(\|T\| , x) \geq \limsup_{y\rightarrow x} \Theta^k(\|T\| , y) \ \ \ \ \   \forall \ x \in U.
$$
\end{theorem}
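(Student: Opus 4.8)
The statement is local, so I would fix $x\in U$ and work in a coordinate chart, identifying a neighborhood of $x$ with an open subset of $\mathbb{R}^n$ equipped with a smooth Riemannian metric; neither $\Theta^k(\|T\|,x)$ nor the semicontinuity inequality changes under such an identification, and over a Riemannian ambient space the monotonicity formula differs from the flat one only by an exponential correction factor tending to $1$ as the radius shrinks, which is already accounted for in \cite[\S 16--17]{S}.

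The plan is to pass from the stationary current $T$ to its associated rectifiable varifold and then quote varifold theory. I would let $(W,\theta,\vec{T})$ be the oriented real $k$-rectifold attached to $T\in RR^{loc}_k(M)$ as in \cite[Definition 2.8]{JC}, and let $V_T$ be the rectifiable varifold carried by $W$ with (unsigned) multiplicity $|\theta|$; its weight measure is $\|T\|$, so $V_T$ and $\|T\|$ have the same density at every point. The one non-formal step is to check that $T$ is stationary in the sense of the preceding definition if and only if $V_T$ is a stationary varifold, i.e. $\delta V_T(V)=\int \mathrm{div}_{T_zW}V\,d\|T\|=0$ for every compactly supported vector field $V$. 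Since each $h_t$ is a diffeomorphism, the pushforward $h_{t\ast}T$ is again a rectifiable current carried by $h_t(W)$ with unsigned multiplicity $|\theta|\circ h_t^{-1}$ (injectivity of $h_t$ rules out any mass cancellation), so by the area formula $\|h_{t\ast}T\|(W)$ equals the weight of the pushforward varifold $(h_t)_{\#}V_T$ on $W$, for all $t$ near $0$ and not merely to first order. Differentiating at $t=0$ and invoking the first variation formula for varifolds (\cite[16.1--16.3]{S}) gives $\delta^{(1)}(T,V)=\delta V_T(V)$; since $W$ may be taken to be any relatively compact open set containing $\mbox{supp}(V)$, vanishing of all $\delta^{(1)}(T,V)$ is exactly $\delta V_T=0$.

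Then I would invoke the monotonicity formula (\cite[\S 17]{S}) for the stationary rectifiable varifold $V_T$: it makes $r\mapsto e^{\Lambda r}\,\|T\|(B_r(x))/(\Omega(k)r^k)$ nondecreasing for small $r$, with $\Lambda$ depending only on the chart, so the limit $\Theta^k(\|T\|,x)=\lim_{r\to 0}\|T\|(B_r(x))/(\Omega(k)r^k)$ exists at every $x\in U$ (it is $0$ for $x\notin\mbox{spt}(T)$). The upper-semicontinuity $\Theta^k(\|T\|,x)\ge\limsup_{y\to x}\Theta^k(\|T\|,y)$ is then exactly \cite[17.8]{S} for $V_T$: choosing $y_j\to x$ realizing the $\limsup$ and a fixed small $\rho$, monotonicity bounds $\Theta^k(\|T\|,y_j)$ by $e^{\Lambda\rho}\|T\|(B_\rho(y_j))/(\Omega(k)\rho^k)$; letting $j\to\infty$, so that $\|T\|(B_\rho(y_j))\to\|T\|(\overline{B_\rho(x)})$, and then $\rho\to 0$ and using monotonicity once more, yields the bound by $\Theta^k(\|T\|,x)$.

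I expect the main obstacle to be precisely the equivalence ``$T$ stationary $\iff V_T$ stationary''; everything afterward is a direct appeal to \cite[\S 16--17]{S}. Within that step the two points needing a little care are: (i) confirming that pushing the oriented rectifiable current forward under a diffeomorphism causes no mass cancellation, so that the current's mass and the varifold's weight agree as functions of $t$ and not only in derivative at $0$, and (ii) the Riemannian-versus-Euclidean discrepancy in the monotonicity formula, both of which are standard and subsumed in the cited sections of \cite{S}.
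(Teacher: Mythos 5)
Your argument is correct and is essentially the paper's own: the paper offers no proof beyond the citation of the first-variation and monotonicity results in \cite[16.1--16.3, 17.8]{S}, and your proposal simply supplies the bridge implicit in that citation, namely that for a diffeomorphism flow the mass of $h_{t\ast}T$ coincides with the weight of the pushforward varifold (no cancellation), so a stationary current yields a stationary varifold to which the monotonicity formula and upper semicontinuity of density apply. The only nitpick is that $\|T\|(B_\rho(y_j))$ need not converge to $\|T\|(\overline{B_\rho(x)})$; one only has $\limsup_j \|T\|(B_\rho(y_j)) \leq \|T\|(\overline{B_{\rho'}(x)})$ for $\rho' > \rho$, which suffices and is what \cite[17.8]{S} actually uses.
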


\begin{corollary}\label{stationary2}
Let $X$ be a complex manifold.
Suppose that $T \in RR^{loc}_{k,k}(X)$ is stationary and $dT = 0$. If there is a constant $c > 0$ such that $\Theta^k(\|T\| , x)$ is either equal to 0 or larger than $c$, then $T$ is a real holomorphic $k$-chain.
\end{corollary}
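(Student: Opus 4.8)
The plan is to reduce the statement to Theorem~\ref{main theorem}. The hypotheses already give $T\in RR^{loc}_{k,k}(X)$ and $dT=0$, so it suffices to show that $\mbox{spt}(T)$ is $\mathcal{H}^{2k}$-locally finite. Since $T$ is stationary, Theorem~\ref{stationary} guarantees that the ($2k$-dimensional) density $\Theta^{2k}(\|T\|,\cdot)$ exists at every point of $X$ and is upper semi-continuous. Put $N=\{x:\Theta^{2k}(\|T\|,x)>0\}$. The dichotomy hypothesis forces $N=\{x:\Theta^{2k}(\|T\|,x)\ge c\}$, and upper semi-continuity makes this set closed.

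Next I would show $\mbox{spt}(T)\subseteq N$. As $T$ is locally real rectifiable, $\|T\|$ is a positive multiplicity times $\mathcal{H}^{2k}$ restricted to a countably $2k$-rectifiable set, on which the density agrees with the multiplicity $\|T\|$-almost everywhere; hence $\|T\|$ gives zero mass to the complement of $N$, and since $N$ is closed the minimality of the support yields $\mbox{spt}(T)=\mbox{spt}(\|T\|)\subseteq N$. Thus $\Theta^{2k}(\|T\|,x)\ge c$ for every $x\in\mbox{spt}(T)$. Now fix a compact $K\subset X$; because $T$ has locally finite mass, $\|T\|(K)<\infty$, and the density comparison \cite[2.10.19(3)]{F69}, applied exactly as in the proof of Corollary~\ref{characterization of RR 2}, gives
$$\|T\|(K)\ \ge\ \|T\|(\mbox{spt}(T)\cap K)\ \ge\ c\,\mathscr{S}^{2k}(\mbox{spt}(T)\cap K)\ \ge\ c\,\mathcal{H}^{2k}(\mbox{spt}(T)\cap K).$$
Hence $\mathcal{H}^{2k}(\mbox{spt}(T)\cap K)\le \tfrac1c\,\|T\|(K)<\infty$, so $\mbox{spt}(T)$ is $\mathcal{H}^{2k}$-locally finite and Theorem~\ref{main theorem} completes the proof.

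The covering estimate in the last display is routine, being the same one used for Corollary~\ref{characterization of RR 2}; the real content is the inclusion $\mbox{spt}(T)\subseteq N$. This needs rectifiability, so that the density is positive $\|T\|$-almost everywhere, together with the closedness of $N$, which itself rests on the stationarity of $T$ through Theorem~\ref{stationary}. Without the density dichotomy one would get only $\|T\|(X\setminus N)=0$, which does not control $\mathcal{H}^{2k}$ on the possibly larger set $\mbox{spt}(T)$---closing precisely this gap is what the hypothesis is for. I would also be careful to match the normalization of the density in Theorem~\ref{stationary} with the constant $c$, and to keep in mind that the density appearing here is the $2k$-dimensional one rather than the $k$-dimensional one.
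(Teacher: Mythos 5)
Your proposal is correct and follows essentially the same route as the paper: stationarity plus the density gap makes $N=\{\Theta^{2k}(\|T\|,\cdot)\ge c\}$ closed, rectifiability forces $\mbox{spt}(T)\subseteq N$ (the paper cites its companion result [Proposition 3.7, JC] for $\mbox{spt}(T)=N$), the standard density--measure comparison gives $\mathcal{H}^{2k}$-local finiteness of the support, and Theorem~\ref{main theorem} finishes. Your explicit remark that upper semi-continuity alone does not close $\{\Theta^{2k}>0\}$ without the dichotomy, and that the density should be the $2k$-dimensional one, are accurate refinements of what the paper leaves implicit.
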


\begin{proof}
By Theorem \ref{stationary}, $N = \{x \in M : \Theta^k(\|T\| , x) > 0\}$ is closed. By \cite[Proposition 3.7]{JC}, we have $\mbox{spt}(T) = N$. By \cite[Theorem 3.2(1)]{S}, for each compact subset $K \subset U$, $c\mathcal{H}^{2k}(K\cap \mbox{spt}(T)) \leq \|T\|(K\cap \mbox{spt}(T)) < \infty$. This implies that $\mbox{spt}(T)$ is $\mathcal{H}^{2k}$-locally finite. It then follows from Theorem \ref{main theorem} that $T$ is a real holomorphic $k$-chain.

\end{proof}

For $M$ a compact oriented Riemannian manifold, Federer and Fleming (\cite{FF60}) showed that the integral homology groups $H_{\ast}(M , \mathbb{Z})$ are naturally isomorphic to the homology groups of the chain complex $I_{\ast}(M)$ with boundary map $d$. By a simple modification of their proof, we can show that the real homology groups $H_{\ast}(M , \mathbb{R})$ is isomorphic to the homology groups of the chain complex of realistic currents $RE_{\ast}(M)$ which are defined as follows.

\begin{definition}
Let $M$ be a smooth manifold. We say that $T\in \mathscr{D}'_k(M)$ is realistic(resp. locally realistic) if
$T\in RR_k(M)$ and $dT\in RR_{k-1}(M)$ (resp. $T\in RR^{loc}_k(M)$ and $dT\in RR^{loc}_{k-1}(M)$).
\end{definition}

We give real counterparts of homologically volume minimizing currents, stationary currents and stable currents that appeared
in Section 3 of \cite{HS74}.

\begin{definition}
A current $T \in RR^{loc}_k(M)$ is said to be real homologically volume minimizing if
$$\M(T) \leq \M(T + dS)$$
for all $S \in RR^{loc}_{k+1}(M)$.
\end{definition}

For a complex manifold $X$, we denote by $\mathscr{RZ}^+_k(X), \mathscr{RZ}^-_k(X)$ the collections of positive and negative real holomorphic $k$-chains on $X$ respectively. The main tool we are going to use is the Wirtinger's inequality which says that if $X$ is a compact K\"ahler manifold with K\"ahler form $\omega$, then for $S \in RR^{loc}_{k,k}(X)$,
$$
S(\frac{1}{k!}\omega^k) \leq \M_X(S)
$$
with equality holding if and only if the tangent $2k$-vectors to $S$ are complex and positive $\|S\|$-almost everywhere (see \cite[5.4.19]{F69}).
The following result is a generalization of \cite[Proposition 3.1]{HS74}.

\begin{proposition}\label{property of holomorphic chain}
Let $X$ be a compact K\"ahler manifold.
\begin{enumerate}
\item
If $S \in \mathscr{RZ}^+_k(X)$, then $S$ is real homologically minimizing.
\item
If $S \in \mathscr{RZ}_k(X)$, then $S$ is stable.
\end{enumerate}
\end{proposition}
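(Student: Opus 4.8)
The plan is to run the classical calibration argument based on Wirtinger's inequality, establishing (1) directly and deducing (2) from it.

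For (1), I would write $S=\sum_j r_j[V_j]$ with all $r_j\ge 0$, so that $S$ is a positive, $d$-closed $(k,k)$-current of finite mass $\M_X(S)=\sum_j r_j\,\mathcal{H}^{2k}(V_j)<\infty$ on the compact manifold $X$, and set $\beta=\frac{1}{k!}\omega^{k}$, a smooth real closed $(k,k)$-form. By Wirtinger's inequality (\cite[5.4.19]{F69}) $\beta$ has comass one, so $U(\beta)\le\M(U)$ for every $2k$-current $U$ of finite mass, and equality holds for $U=S$ because the tangent $2k$-vectors of $S$ are complex and positively oriented $\|S\|$-a.e. (the $r_j$ being nonnegative). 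Then for any $R\in RR^{loc}_{2k+1}(X)$, using $d\beta=0$ and the compactness of $X$,
\[
\M_X(S)=S(\beta)=(S+dR)(\beta)\le\M_X(S+dR),
\]
the inequality being trivial if $\M_X(S+dR)=\infty$; hence $S$ is real homologically volume minimizing.

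For (2), I would decompose $S=S^{+}-S^{-}$ with $S^{+}=\sum_{r_j>0}r_j[V_j]$ and $S^{-}=\sum_{r_j<0}(-r_j)[V_j]$, both in $\mathscr{RZ}^{+}_k(X)$, and first check that $\|S^{+}\|$ and $\|S^{-}\|$ are mutually singular: they are concentrated on $V^{+}=\bigcup_{r_j>0}V_j$ and $V^{-}=\bigcup_{r_j<0}V_j$, while $V^{+}\cap V^{-}$ is a countable union of intersections $V_i\cap V_j$ of distinct irreducible components, each of complex dimension $<k$, so $\mathcal{H}^{2k}(V^{+}\cap V^{-})=0$. Fix a compactly supported vector field $V$ with flow $\{h_t\}$ and a relatively compact open $W\supset\mbox{supp}(V)$. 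Since each $h_t$ is a diffeomorphism it carries $\mathcal{H}^{2k}$-null sets to $\mathcal{H}^{2k}$-null sets, so $h_t(V^{+})\cap h_t(V^{-})=h_t(V^{+}\cap V^{-})$ is $\mathcal{H}^{2k}$-null, whence $\|h_{t\ast}S^{+}\|$ and $\|h_{t\ast}S^{-}\|$ stay mutually singular and
\[
J_{S,V}(t)=\|h_{t\ast}S\|(W)=\|h_{t\ast}S^{+}\|(W)+\|h_{t\ast}S^{-}\|(W)=J_{S^{+},V}(t)+J_{S^{-},V}(t).
\]
To finish, I would show each $J_{S^{\pm},V}$ has a minimum at $t=0$: by the homotopy formula for currents applied to the smooth map $(s,x)\mapsto h_s(x)$ and $dS^{+}=0$, one has $h_{t\ast}S^{+}-S^{+}=dE_t$ with $E_t=h_\ast([0,t]\times S^{+})$ a compactly supported real rectifiable $(2k+1)$-current, so $E_t\in RR^{loc}_{2k+1}(X)$ and part (1) gives $\M_X(h_{t\ast}S^{+})\ge\M_X(S^{+})$; since $h_t$ is the identity off $\mbox{supp}(V)\subset W$, the currents $h_{t\ast}S^{+}$ and $S^{+}$ agree on $X\setminus W$ and this localizes to $J_{S^{+},V}(t)\ge J_{S^{+},V}(0)$ for all $t$, likewise for $S^{-}$, and summing gives $J_{S,V}(t)\ge J_{S,V}(0)$, so $S$ is stable.

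I expect the one delicate point to be the mass additivity $\|h_{t\ast}S\|=\|h_{t\ast}S^{+}\|+\|h_{t\ast}S^{-}\|$: it rests on the positive and negative parts of a real holomorphic chain being supported on subvarieties that meet in an $\mathcal{H}^{2k}$-null set, and on this disjointness being preserved under the flow because Lipschitz maps send $\mathcal{H}^{2k}$-null sets to $\mathcal{H}^{2k}$-null sets. The calibration inequality and the homotopy formula, as well as the localization of the mass comparison to $W$, are routine.
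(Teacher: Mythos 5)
Your proposal is correct and follows essentially the same route as the paper: part (1) by calibrating with $\frac{1}{k!}\omega^k$ via Wirtinger's inequality, and part (2) by splitting $S=S^+-S^-$, using that the supports meet in an $\mathcal{H}^{2k}$-null set (preserved by the diffeomorphisms $h_t$) to get mass additivity, and then applying the homotopy formula together with part (1). Your treatment is, if anything, slightly more careful than the paper's on the countable-union intersection and the localization to $W$.
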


\begin{proof}
(a) For any $R \in RE^{loc}_{k+1}(X)$,
$$
\M(S + dR) \geq (S + dR)(\frac{1}{k!}\omega^k) = S(\frac{1}{k!}\omega^k) = \M(S),
$$
since $\frac{1}{k!}\omega^k$ is $d$-closed.

(b) Suppose $S = \sum^{\infty}_{j=1} r_j [V_j]$ with $r_j \in \mathbb{R}$. Then $S$ can be expressed as $S_1 - S_2$ where $S_1$ and $S_2$ belong to $\mathscr{RZ}^+_k(X)$. Let $\{h_t\}$ be a 1-parameter family of diffeomorphisms of $X$ and $A = \mbox{spt}(S_1)\cap \mbox{spt}(S_2)$. Since $A$ is a holomorphic subvariety of complex dimension $(k-1)$, $A$ has $\|S_1\|$ and $\|S_2\|$ measure zero. Hence $h_t$ is a diffeomorphism implies that $\M(h_{t\ast}S) = \M(h_{t\ast}S_1) + \M(h_{t\ast}S_2)$. Let $H : h_0 \simeq h_y$ be the deformation from $h_0$ to $h_t$. By the homotopy formula, we have
$$
S_i - h_{t\ast}S_i = (-1)^k dH_{\ast}(I_t \times S_i), i = 1, 2,
$$
where $I_t = [0,t]$ or $[t, 0]$. Hence, by part (a), $\M(S_i) \leq \M(h_{t\ast}S_i), i = 1, 2$. Therefore,
$$
\M(S) = \M(S_1) + \M(S_2) \leq \M(h_{t\ast}S_1) + \M(h_{t\ast}S_2) = \M(h_{t\ast}S).
$$
This shows that $\M(h_{t\ast}S)$ has a minimum at $t = 0$ and thus $S$ is stable.
\end{proof}

The following is a generalization of \cite[Theorem 3.2]{HS74}.

\begin{theorem}\label{homological class represented by chain}
Let $X$ be a compact K\"ahler manifold. Suppose that $\gamma \in H_{2k}(X , \mathbb{R})$ has a representative $S \in \mathscr{RZ}^+_k(X)$. If $\gamma$ has a homologically volume minimizing representative $T \in RR_{2k}(X)$ such that $N = \{x \in X : \Theta^{2k}(\|T\| , x) > 0 \}$ is $\mathcal{H}^{2k}$-locally finite, then $T \in \mathscr{RZ}^+_k(X)$.
\end{theorem}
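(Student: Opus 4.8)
The plan is to run the Harvey--Shiffman calibration argument: use Wirtinger's inequality to force $T$ to be a positive current of type $(k,k)$, then exploit positivity and $d$-closedness to control $\mbox{spt}(T)$, and conclude with Theorem~\ref{main theorem}. We may assume $T\neq 0$.

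First I would observe that $S$, being a positive real holomorphic $k$-chain, has complex, positively oriented approximate tangent $2k$-planes $\|S\|$-a.e., so that equality holds in Wirtinger's inequality, $S(\frac{1}{k!}\omega^k)=\M_X(S)$; moreover $S$ is real homologically volume minimizing by Proposition~\ref{property of holomorphic chain}(1). Since $T$ and $S$ are both $d$-closed and represent $\gamma$, they are homologous, say $T=S+dR$ for a realistic $(2k+1)$-current $R$; as each of $T$ and $S$ minimizes mass in $\gamma$, we get $\M_X(T)=\M_X(S)$, and because $\frac{1}{k!}\omega^k$ is $d$-closed,
$$
T(\tfrac{1}{k!}\omega^k)=S(\tfrac{1}{k!}\omega^k)=\M_X(S)=\M_X(T).
$$
By the equality case of Wirtinger's inequality the approximate tangent $2k$-vectors of $T$ are complex and positively oriented $\|T\|$-a.e.; equivalently $T$ is a positive current of type $(k,k)$. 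Thus $T\in RR^{loc}_{k,k}(X)$, $T$ is positive, and $dT=0$.

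The second step is to promote the hypothesis that $N$ is $\mathcal{H}^{2k}$-locally finite to the conclusion that $\mbox{spt}(T)$ is $\mathcal{H}^{2k}$-locally finite. Since $T$ is rectifiable, $\|T\|$ is $\Theta^{2k}(\|T\|,\cdot)\,\mathcal{H}^{2k}$ restricted to a countably $2k$-rectifiable set $W$ on which the density is positive $\mathcal{H}^{2k}$-a.e.; hence $\|T\|$ is carried by $N$ and $\mbox{spt}(T)=\overline{N}$. Now that $T$ is positive and $d$-closed, Siu's semicontinuity theorem shows that for each $c>0$ the set $E_c:=\{x:\Theta^{2k}(\|T\|,x)\ge c\}$ is an analytic subvariety of $X$, so $N=\bigcup_{j\ge 1}E_{1/j}$. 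Using $\mathcal{H}^{2k}(N\cap K)<\infty$ for compact $K$ together with the Lelong lower bound (a point of a $k$-dimensional subvariety has $\Theta^{2k}\ge 1$ there, so such subvarieties have uniformly positive area in a fixed small ball), only finitely many irreducible $k$-dimensional components of the $E_{1/j}$ can meet any compact set; these assemble into a $k$-dimensional analytic subvariety $V\subseteq N$ with $\mathcal{H}^{2k}(N\setminus V)=0$. Then $\|T\|$ is carried by $V$, so $\mbox{spt}(T)\subseteq V$, which is $\mathcal{H}^{2k}$-locally finite.

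Finally, $T\in RR^{loc}_{k,k}(X)$ is $d$-closed with $\mathcal{H}^{2k}$-locally finite support, so Theorem~\ref{main theorem} gives that $T$ is a real holomorphic $k$-chain; being positive, $T\in\mathscr{RZ}^+_k(X)$. The main obstacle is the second step: a priori $\mbox{spt}(T)=\overline{N}$ could be $\mathcal{H}^{2k}$-larger than $N$, and then one is stuck, since a $d$-closed real rectifiable $(k,k)$-current need not be a holomorphic chain without a support restriction. Positivity and $d$-closedness of $T$ are exactly what break this, through the analyticity of the density super-level sets supplied by Siu's theorem. An alternative for this step is to note that $T$, being real homologically volume minimizing, is stationary (by the homotopy formula, as in the proof of Proposition~\ref{property of holomorphic chain}(2)), hence has upper semicontinuous density by Theorem~\ref{stationary}, and then to transfer the $\mathcal{H}^{2k}$-finiteness from $N$ to $\mbox{spt}(T)$ as in \cite[Proposition 3.7]{JC}.
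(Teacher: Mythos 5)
Your proposal is correct, and its decisive first step is exactly the paper's argument: since $S\in\mathscr{RZ}^+_k(X)$ calibrates, $\M(S)=S(\frac{1}{k!}\omega^k)=T(\frac{1}{k!}\omega^k)\leq\M(T)$, while minimality of $T$ gives the reverse inequality, so equality in Wirtinger forces $T$ to be a positive $(k,k)$-current. Where you diverge is the endgame. The paper stops there and invokes \cite[Theorem 3.9]{JC}, the positive-case characterization, whose hypotheses ($T$ positive, $d$-closed, real rectifiable, with $N=\{\Theta^{2k}(\|T\|,\cdot)>0\}$ being $\mathcal{H}^{2k}$-locally finite) are at that point all in hand; note that the hypothesis of the present theorem is deliberately phrased in terms of $N$ rather than $\mbox{spt}(T)$ precisely so that this citation applies directly. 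You instead re-derive the needed support control: Siu's semicontinuity theorem makes the density superlevel sets analytic, the $\mathcal{H}^{2k}$-finiteness of $N$ plus the Lelong lower bound force the $k$-dimensional components to be locally finite in number, and hence $\mbox{spt}(T)$ lies in a $k$-dimensional subvariety, after which Theorem \ref{main theorem} applies. This is sound, but it essentially reproves the cited positive-case result (which, as the introduction notes, itself rests on Siu's theorem), so it buys self-containedness at the cost of duplicating \cite{JC}. Your alternative route via stationarity and upper semicontinuity of the density is also valid and is in fact the mechanism the paper uses elsewhere (Corollary \ref{stationary2}); either way the conclusion $T\in\mathscr{RZ}^+_k(X)$ follows since $T$ is positive.
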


\begin{proof}
Since both $S$ and $T$ belong to $\gamma$, there exists a realistic current $R \in RE_{2k}(X)$ such that $S = T + dR$. Hence
$$
\M(S) = S(\frac{1}{k!}\omega^k) = (T + dR)(\frac{1}{k!}\omega^k) = T(\frac{1}{k!}\omega^k) \leq \M(T).
$$
By assumption, $\M(S) = \M(T)$. Therefore $\M(T) = T(\frac{1}{k!}\omega^k)$ which implies $T$ is positive. By \cite[Theorem 3.9]{JC}, $T$ is a positive real holomorphic $k$-chain.
\end{proof}

\begin{corollary}
Let $X$ be a compact K\"ahler manifold. Suppose that $\gamma \in H_{2k}(X , \mathbb{R})$ has a representative $S \in \mathscr{RZ}^+_k(X)$. If $\gamma$ has a homologically volume minimizing representative $T \in RR_{2k}(X)$ such that $\mbox{spt}(T)$ is $\mathcal{H}^{2k}$-locally finite, then $T \in \mathscr{RZ}^+_k(X)$.
\end{corollary}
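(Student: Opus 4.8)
The plan is to deduce this directly from Theorem \ref{homological class represented by chain} by checking that the hypothesis on the set $N$ there is implied by the weaker hypothesis on $\mbox{spt}(T)$ imposed here. Recall that $N = \{x \in X : \Theta^{2k}(\|T\| , x) > 0\}$. Since the density $\Theta^{2k}(\|T\| , \cdot)$ vanishes identically on $X - \mbox{spt}(T)$, we have $N \subseteq \mbox{spt}(T)$, and $N$ is a Borel (hence $\mathcal{H}^{2k}$-measurable) subset of $X$ because the density is a Borel function. Now fix $x \in X$; using that $\mbox{spt}(T)$ is $\mathcal{H}^{2k}$-locally finite, choose an open neighborhood $V \subset X$ of $x$ with $\mathcal{H}^{2k}(V \cap \mbox{spt}(T)) < \infty$. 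Then $\mathcal{H}^{2k}(V \cap N) \leq \mathcal{H}^{2k}(V \cap \mbox{spt}(T)) < \infty$, so $N$ is $\mathcal{H}^{2k}$-locally finite.

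With this observation in hand, Theorem \ref{homological class represented by chain} applies without change: $\gamma \in H_{2k}(X , \mathbb{R})$ has the representative $S \in \mathscr{RZ}^+_k(X)$ and the homologically volume minimizing representative $T \in RR_{2k}(X)$, and the associated set $N$ is now known to be $\mathcal{H}^{2k}$-locally finite, whence $T \in \mathscr{RZ}^+_k(X)$. I do not expect any real obstacle in this argument; the only points needing a (routine) remark are the inclusion $N \subseteq \mbox{spt}(T)$ and the measurability of $N$, both of which are immediate from the definitions of density and of support, together with monotonicity of Hausdorff measure on subsets.
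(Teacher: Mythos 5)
Your proposal is correct and matches the paper's (implicit) argument: the corollary is stated immediately after Theorem \ref{homological class represented by chain} with no separate proof, precisely because $N=\{x: \Theta^{2k}(\|T\|,x)>0\}\subseteq \mbox{spt}(T)$ and monotonicity of $\mathcal{H}^{2k}$ make the reduction immediate, exactly as you argue. One small terminological quibble: the hypothesis on $\mbox{spt}(T)$ is the \emph{stronger} hypothesis (it implies the one on $N$), so the corollary is the weaker statement, not the hypothesis.
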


If $X = \mathbb{C}P^n$ (complex projective $n$-space with the usual
K\"ahler metric), then each $2k$-dimensional integral homology class has a representative either in $\mathscr{Z}^+_k(X)$ or $\mathscr{Z}^-_k(X)$. So we also have each $2k$-dimensional real homology class has a representative either in $\mathscr{RZ}^+_k(X)$ or $\mathscr{RZ}^-_k(X)$.
The integral case of the following result was obtained by Harvey and Shiffman in \cite[Corollary 3.3]{HS74}.

\begin{corollary}
 Let $T \in RR_{2k}(\mathbb{C}P^n)$ with $dT = 0$. Suppose that $\mathcal{H}^{2k}(\mbox{spt}(T)) < \infty$ or there is a constant $c > 0$ such that $\Theta^{2k}(\|T\| , x) \geq c$ for $\|T\|$-almost all $x$. Then $T$ is real homologically volume minimizing if and only if either $T \in \mathscr{RZ}^+_k(X)$ or $\mathscr{RZ}^-_k(X)$.
\end{corollary}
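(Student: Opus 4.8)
The plan is to read the corollary off from Proposition~\ref{property of holomorphic chain}, Theorem~\ref{homological class represented by chain} and the corollary following it, using only the extra facts that $X=\mathbb{C}P^n$ is compact K\"ahler and that $H_{2k}(\mathbb{C}P^n,\mathbb{R})$ is the one-dimensional space spanned by $[\mathbb{C}P^k]$. For the ``if'' direction: if $T\in\mathscr{RZ}^+_k(X)$ then $T$ is real homologically volume minimizing by Proposition~\ref{property of holomorphic chain}(a); if instead $T\in\mathscr{RZ}^-_k(X)$, then $-T\in\mathscr{RZ}^+_k(X)$ is real homologically volume minimizing, and since $\M(-R)=\M(R)$ for all $R$ and $S\mapsto -S$ is a bijection of $RR^{loc}_{2k+1}(X)$, the inequality $\M(-T)\le\M(-T+dS)$ for all $S$ yields $\M(T)\le\M(T+dS')$ for all $S'$ (take $S=-S'$), so $T$ is real homologically volume minimizing as well.

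For the ``only if'' direction, assume $T$ is real homologically volume minimizing. Since $T\in RR_{2k}(\mathbb{C}P^n)$ and $dT=0$, $T$ is a realistic cycle and represents a class $[T]=\lambda[\mathbb{C}P^k]$ in $H_{2k}(\mathbb{C}P^n,\mathbb{R})\cong\mathbb{R}$. If $\lambda=0$ then $T=dR$ for some $R\in RE_{2k+1}(X)\subseteq RR^{loc}_{2k+1}(X)$, and testing the minimizing inequality against $-R$ gives $\M(T)\le\M(0)=0$, so $T=0\in\mathscr{RZ}^+_k(X)$. If $\lambda<0$, replace $T$ by $-T$: since $\|-T\|=\|T\|$ the support and density hypotheses are unchanged, $-T$ is real homologically volume minimizing by the previous paragraph, and $[-T]=(-\lambda)[\mathbb{C}P^k]$ with $-\lambda>0$; a positive conclusion for $-T$ then gives $T\in\mathscr{RZ}^-_k(X)$. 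Hence it suffices to treat $\lambda>0$, in which case $[T]$ has the positive representative $S=\lambda[\mathbb{C}P^k]\in\mathscr{RZ}^+_k(X)$.

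It remains to verify the support hypothesis of Theorem~\ref{homological class represented by chain}, namely that $N=\{x\in X:\Theta^{2k}(\|T\|,x)>0\}$ is $\mathcal{H}^{2k}$-locally finite; that theorem then gives $T\in\mathscr{RZ}^+_k(X)$. Since $N\subseteq\mbox{spt}(T)$, this is immediate when $\mathcal{H}^{2k}(\mbox{spt}(T))<\infty$ (one may alternatively quote directly the corollary following Theorem~\ref{homological class represented by chain}). In the remaining case $\Theta^{2k}(\|T\|,x)\ge c$ for $\|T\|$-a.e.\ $x$: $T\in RR_{2k}$ has finite mass and $\|T\|$ is $2k$-rectifiable, so $\|T\|=\Theta^{2k}(\|T\|,\cdot)\,\mathcal{H}^{2k}\lfloor W$ for a countably $2k$-rectifiable set $W$; hence $\|T\|$ is concentrated on $N$ and
$$\M(T)=\|T\|(N)=\int_N\Theta^{2k}(\|T\|,\cdot)\,d\mathcal{H}^{2k}\ge c\,\mathcal{H}^{2k}(N),$$
so $\mathcal{H}^{2k}(N)\le\M(T)/c<\infty$. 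Combining this with the reductions of the preceding paragraph completes the proof.

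I expect no serious obstacle: the statement is essentially a packaging of the earlier results together with the one-dimensionality of $H_{2k}(\mathbb{C}P^n,\mathbb{R})$ and the symmetry under $T\mapsto-T$. The one point requiring a little care is the last step, i.e.\ converting the a.e.\ density bound ``$\Theta^{2k}(\|T\|,x)\ge c$ for $\|T\|$-a.e.\ $x$'' into $\mathcal{H}^{2k}(N)<\infty$, which relies on writing the mass measure of a finite-mass real rectifiable current as a density times a restriction of $\mathcal{H}^{2k}$.
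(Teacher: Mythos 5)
Your proof is correct, and in the ``if'' direction and in the first alternative ($\mathcal{H}^{2k}(\mbox{spt}(T))<\infty$) it coincides with the paper's, which likewise quotes Proposition \ref{property of holomorphic chain} and Theorem \ref{homological class represented by chain}; your explicit reduction via the one-dimensionality of $H_{2k}(\mathbb{C}P^n,\mathbb{R})$ and the symmetry $T\mapsto -T$ only spells out what the paper leaves implicit in the sentence preceding the corollary. Where you genuinely diverge is the second alternative, the density bound $\Theta^{2k}(\|T\|,x)\ge c$ for $\|T\|$-a.e.\ $x$. The paper does not try to verify the hypothesis of Theorem \ref{homological class represented by chain} there: it instead shows that a closed real homologically minimizing current on a compact Riemannian manifold is stable (via the proof of Federer 7.1.7), hence stationary, and then invokes Corollary \ref{stationary2}, whose proof rests on the upper semicontinuity of the density for stationary currents (Theorem \ref{stationary}) to conclude that $N$ is closed and equals $\mbox{spt}(T)$. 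You bypass all of that: since $T\in RR_{2k}$ has finite mass, $\|T\|=\Theta^{2k}(\|T\|,\cdot)\,\mathcal{H}^{2k}\lfloor W$ on a countably $2k$-rectifiable carrier, the a.e.\ lower bound forces $\mathcal{H}^{2k}(N)\le \M(T)/c<\infty$, and Theorem \ref{homological class represented by chain} then applies uniformly in both cases. Your route is more elementary (no monotonicity formula, no stability argument) and arguably tighter: Corollary \ref{stationary2} as stated needs the pointwise dichotomy ``$\Theta^{2k}(\|T\|,x)$ is either $0$ or $\ge c$'' everywhere, which is not literally what the $\|T\|$-a.e.\ hypothesis of the present corollary supplies, so the paper's route requires an extra word at that point that yours does not. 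The only thing the paper's detour buys is machinery (stability of minimizers, semicontinuity of density) that it wants on record for its other applications; for the corollary itself your argument is sufficient and self-contained, given the standard representation of the mass measure of a finite-mass real rectifiable current that you correctly flag as the one delicate step.
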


\begin{proof}
For the case $\mathcal{H}^{2k}(\mbox{spt}(T)) < \infty$, the result follows from Theorem \ref{homological class represented by chain}. Now suppose there is a constant $c > 0$ such that $\Theta^{2k}(\|T\| , x) \geq c$ for $\|T\|$-almost all $x$. By the proof of \cite[7.1.7]{F69}, we can show that every closed real homologically minimizing current on a compact oriented Riemannian manifold is stable. Then the conclusion follows by Corollary \ref{stationary2}. The converse follows by Proposition \ref{property of holomorphic chain}.
\end{proof}

\begin{theorem}
Let $\{T_j\}^{\infty}_{j=1}$ be a sequence of real holomorphic $k$-chains on $X$ with locally uniformly bounded mass (see \cite[Theorem 3.9]{HS74}). Suppose that $\cup_{j=1}^{\infty} \mbox{spt}(T_j)$ is $\mathcal{H}^{2k}$-locally finite. Then there exists a subsequence of $\{T_j\}^{\infty}_{j=1}$ that converges in the locally flat topology to a real holomorphic $k$-chain.
\end{theorem}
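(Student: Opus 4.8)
The strategy is to extract a flatly convergent subsequence by Federer--Fleming compactness, identify its limit $T$ as a locally real rectifiable current of type $(k,k)$ whose support has locally finite $\mathcal H^{2k}$-measure, and then invoke Theorem \ref{main theorem}. Write $T_j=\sum_i r_i^{(j)}[V_i^{(j)}]$ and set $A:=\bigcup_{j=1}^\infty\mathrm{spt}(T_j)$. Since each $T_j$ is $d$-closed with locally finite mass, the $T_j$ form a sequence of locally normal currents with locally uniformly bounded mass and zero boundary mass; by the Federer--Fleming compactness theorem together with a diagonal argument over a compact exhaustion of $X$, some subsequence converges in the locally flat topology to a current $T$ with $dT=0$ and locally finite mass, and $T$ is again of bidimension $(k,k)$ because that is a closed condition. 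I would run the same argument for the reduced support currents $[S_j]$, where $S_j:=\mathrm{spt}(T_j)=\bigcup_{r_i^{(j)}\ne 0}V_i^{(j)}$ is a pure $k$-dimensional analytic subvariety and $[S_j]=\sum_{r_i^{(j)}\ne 0}[V_i^{(j)}]$: these are positive and $d$-closed, and $\mathbf M\big([S_j]\llcorner W\big)=\mathcal H^{2k}(S_j\cap W)\le\mathcal H^{2k}(A\cap W)<\infty$ for every relatively compact open $W$, so after a further subsequence $[S_j]\to Q$ in the locally flat topology with $Q\ge 0$, $dQ=0$, $Q$ of bidimension $(k,k)$.

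Next I would show $\overline A$ is $\mathcal H^{2k}$-locally finite. First $\mathrm{spt}(Q)=\overline A$: if $a\notin\overline A$ then $[S_j]$ and $T_j$ vanish near $a$ for all $j$, so $Q$ and $T$ vanish near $a$; the reverse inclusion is trivial. Now fix $a\in\overline A$ and choose $a_m\in S_{j_m}$ with $a_m\to a$. Since $S_{j_m}$ is a pure $k$-dimensional analytic variety, $\Theta^{2k}([S_{j_m}],a_m)\ge 1$, so by the upper semicontinuity of Lelong numbers along weakly convergent sequences of positive $d$-closed currents (Siu's semicontinuity theorem), applied to $[S_{j_m}]\to Q$, we get $\Theta^{2k}(Q,a)\ge\limsup_m\Theta^{2k}([S_{j_m}],a_m)\ge 1$. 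Thus $\Theta^{2k}(Q,\cdot)\ge 1$ on $\mathrm{spt}(Q)$; by Theorem \ref{structure theorem}, $Q$ is a positive real holomorphic $k$-chain, so $\overline A=\mathrm{spt}(Q)$ is a pure $k$-dimensional analytic variety, hence $\mathcal H^{2k}$-locally finite. (One may avoid Theorem \ref{structure theorem}: from $\Theta^{2k}(Q,\cdot)\ge 1$ on $\mathrm{spt}(Q)$ and $\|Q\|(K)<\infty$ one obtains $\|Q\|(K)\ge\mathcal H^{2k}(\mathrm{spt}(Q)\cap K)$ by \cite[2.10.19(3)]{F69}, exactly as in the proof of Corollary \ref{characterization of RR 2}.)

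It remains to verify the hypotheses of Theorem \ref{main theorem} for $T$. Since $T$ vanishes near every point outside $\overline A$, $\mathrm{spt}(T)\subset\overline A$ is $\mathcal H^{2k}$-locally finite. For local real rectifiability I would first observe that the coefficients $r_i^{(j)}$ are locally uniformly bounded: for a component $V_i^{(j)}$ meeting a given $W'\Subset W\Subset X$, monotonicity of the area of analytic varieties gives $|r_i^{(j)}|\,\Omega(2k)\rho_0^{2k}\le\mathbf M(T_j\llcorner W)$ with $\rho_0=\mathrm{dist}(W',\partial W)$, so $\|T_j\|\llcorner W'\le C\,\mathcal H^{2k}\llcorner(\overline A\cap W')$ uniformly in $j$. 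Lower semicontinuity of mass under flat convergence then yields $\|T\|\le C\,\mathcal H^{2k}\llcorner\overline A$ locally, so $\|T\|\ll\mathcal H^{2k}\llcorner\overline A$; in particular $\|T\|$ does not charge the singular locus of $\overline A$, hence $T=T\llcorner\overline A_{\mathrm{reg}}$, a current of locally finite mass of top dimension on the complex manifold $\overline A_{\mathrm{reg}}$, which is therefore locally real rectifiable, and of type $(k,k)$ since its orienting $2k$-vector is a real multiple of the complex one $\|T\|$-almost everywhere. Thus $T\in RR^{loc}_{k,k}(X)$ is $d$-closed with $\mathcal H^{2k}$-locally finite support, and Theorem \ref{main theorem} shows $T$ is a real holomorphic $k$-chain, which is the flat limit of the chosen subsequence.

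I expect the main obstacle to be establishing that the flat limit $T$ is genuinely locally real rectifiable rather than a diffuse flat chain: unlike the integral case there is no closure theorem for real rectifiable currents under flat convergence, so this must be extracted from the geometry of the supports. The two ingredients that make this work are the auxiliary multiplicity-one currents $[S_j]$, whose masses are bounded directly by the standing hypothesis on $A$ (independently of the sizes of the $r_i^{(j)}$) and whose Lelong numbers are bounded below by $1$, which pins $\overline A$ down as an analytic variety; and the uniform bound on the $r_i^{(j)}$, which prevents $\|T\|$ from concentrating and forces $\|T\|\ll\mathcal H^{2k}\llcorner\overline A$.
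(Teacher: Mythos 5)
Your overall skeleton (flat compactness for locally normal currents, then verification of the hypotheses of Theorem \ref{main theorem} for the limit, then one application of that theorem) is the same as the paper's, but the way you verify those hypotheses is genuinely different. The paper disposes of both verifications by citation to its companion paper: \cite[Proposition 3.8]{JC} shows that under the $\mathcal H^{2k}$-local finiteness hypothesis the union $A=\bigcup_j\mbox{spt}(T_j)$ is already closed, so $\mbox{spt}(T)\subset\overline{A}=A$ is $\mathcal H^{2k}$-locally finite for free, and \cite[Corollary 2.14]{JC} is a support-type theorem giving $T\in RR^{loc}_{k,k}(X)$ directly. You instead reconstruct both facts from scratch via the multiplicity-one currents $[S_j]$, semicontinuity of densities for positive closed currents, Theorem \ref{structure theorem}, and a uniform bound on the coefficients $r_i^{(j)}$. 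This is self-contained and illuminating --- in particular the observation that the masses of the $[S_j]$ are controlled by $\mathcal H^{2k}(A)$ alone, independently of the sizes of the coefficients, is exactly the right mechanism --- at the cost of being much longer than the paper's three-line argument.

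Two places in your version need repair. First, the claim $\mbox{spt}(Q)=\overline A$ is false in general, and the inclusion $\overline A\subset\mbox{spt}(Q)$ is not ``trivial'': a point of $\mbox{spt}(T_1)$ far from all the other $\mbox{spt}(T_j)$ lies in $\overline A$ but is invisible to the limit of the chosen subsequence (take $T_1=[V]$ and $T_j=[W]$ for $j\ge 2$ with $V\cap W=\emptyset$). What your density argument actually proves, and all you actually need, is that $\mbox{spt}(T)$ is contained in $\limsup_m S_{j_m}$, which by the semicontinuity of densities lies in $\{a:\Theta^{2k}(\|Q\|,a)\ge 1\}$, a set of $\mathcal H^{2k}$-locally finite measure by \cite[2.10.19(3)]{F69}; so replace $\overline A$ by $\mbox{spt}(Q)$ (or by this density set) throughout the second half of your argument. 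Second, ``a current of locally finite mass of top dimension on the manifold $\overline A_{\mathrm{reg}}$ is locally real rectifiable'' is not true for mass reasons alone: a finite-mass current supported in a submanifold can have orienting vector transverse to it. You need that $T$ is locally flat (it is in fact locally normal, being a $d$-closed flat limit with locally bounded mass) and then the support/constancy theorem for locally flat currents in a $C^1$ submanifold (see \cite{F69}) to write $T$ as $\theta\cdot[M]$ on each component $M$ of the regular locus. With those two patches your argument closes and gives an alternative, essentially self-contained proof.
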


\begin{proof}
By \cite[4.2.17(1)]{F69}, there exist $T \in N^{loc}_{2k}(X)$ and a subsequence $\{T_{j_k}\}^{\infty}_{j=1}$ such that $T_{j_k}\rightarrow T$ in the locally flat topology. Since each $T_j$ is of bidimension $(k,k)$, $T \in N^{loc}_{k,k}(X)$. By assumption and \cite[Proposition 3.8]{JC}, $\cup^{\infty}_{j=1} \mbox{spt}(T_j)$ is closed in $X$. So we have $\mbox{spt}(T) \subset \overline{\cup^{\infty}_{j=1} \mbox{spt}(T_j)} = \cup^{\infty}_{j=1} \mbox{spt}(T_j)$. By \cite[Corollary 2.14]{JC}, $T \in RR^{loc}_{k,k}(X)$. Clearly, $T$ is $d$-closed. Thus, by Theorem \ref{main theorem}, $T$ is a real holomorphic $k$-chain.
\end{proof}

\begin{remark}
The condition that $\cup^{\infty}_{j=1} \mbox{spt}(T_j)$ is $\mathcal{H}^{2k}$-locally finite is necessary. Take $T_j = \sum_{n=1}^j \frac{1}{n^2}[\frac{1}{n}]$ and $T = \sum_{n=1}^{\infty} \frac{1}{n^2}[\frac{1}{n}]$ in $\mathbb{C}$. Then $T_j$ converges to $T$ in mass norm, but $T$ is not a holomorphic chain.
\end{remark}

\end{document}